\documentclass[12pt]{amsart}
\usepackage{amsmath,amssymb,amsthm,hyperref}
\usepackage[margin=1.25in]{geometry}
\title{Gaussian-weighted normal operators on Euclidean space}
\author{Yuzhou Joey Zou}
\address{Department of Mathematics and Statistics \\
Oakland University\\
Rochester, MI 48309, U.S.A.}
\email{yzou@oakland.edu}
\date{\today}
\newtheorem{theorem}{Theorem}[section]
\newtheorem{lemma}[theorem]{Lemma}
\newtheorem{corollary}[theorem]{Corollary}
\newtheorem{proposition}[theorem]{Proposition}

\theoremstyle{remark}

\newtheorem{remark}[theorem]{Remark}

\usepackage{xcolor}

\newcommand{\gpeuc}[1]{\mathcal{GP}_{\mathbb{R}^d}^{#1}}
\newcommand{\gpg}[1]{\mathcal{GP}_{\mathcal{G}}^{#1}}

\begin{document}

\sloppy

\begin{abstract}
We consider the normal operator of the X-ray transform, weighted with Gaussian weights, in Euclidean space $\mathbb{R}^d$ with $d\ge 3$. We show the eigenfunctions of the normal operator are joint eigenfunctions of the harmonic oscillator and the spherical Laplacian, and we relate the spectrum to that of elliptic operators in the 1-cusp pseudodifferential calculus.
\end{abstract}

\maketitle


\section{Introduction}
We consider Gaussian weighted normal operators of the form
\[N=e^{-\rho^2/2}I_0^\sharp e^{|p|^2} I_0 e^{-\rho^2/2},\]
where $I_0$ is the geodesic X-ray transform on $\mathbb{R}^d$ and $I_0^\sharp$ is the backprojection associated to the X-ray transform. We take the dimension $d$ to be at least $3$, although many of our results carry over to $d=2$ as well.

Here, $\rho$ is the distance to the origin in $\mathbb{R}^d$. On the data side, we parametrize geodesics by
\[\mathcal{G} := T\mathbb{S}^{d-1} = \{(v,p)\in\mathbb{S}^{d-1}\times\mathbb{R}^d\,:\,v\cdot p = 0\},\]
where $(v,p)$ corresponds to the geodesic starting at $p$ with velocity $v$. Note that $p$ is then the closest point on the geodesic to the origin, with $|p|$ representing the distance of the geodesic from the origin. In this parametrization, we have
\[I_0f(v,p) = \int_{\mathbb{R}} f(p+tv)\,dt\]
and
\[I_0^\sharp g(z) = \int_{\mathbb{S}^{d-1}} g(v,p(z,v))\,dv,\]
where $p(z,v) = z-(z\cdot v)v$ is the projection of $z$ onto $v^\perp$.

These exponentially weighted normal operators turn out to relate to the normal operators constructed by Vasy and Zachos \cite{VZ-24} in studying the X-ray transform on asymptotically conic spaces (including Euclidean space). They consider an asymptotically conic manifold, of dimension at least $3$, which comprises of a compact region glued to a conical region at infinity $(0,\epsilon)_x\times Y$ where the metric at infinity takes the form
\[g  = \frac{dx^2}{x^4} + \frac{h(x)}{x^2},\]
with $h(x)$ a family of metrics on $Y$ varying smoothly in $x$. The prototypical example is Euclidean space $\mathbb{R}^d$, with $x = 1/\rho$, $Y = \mathbb{S}^{d-1}$, and $h(x)=g_{\mathbb{S}^{d-1}}$, the usual round metric, independent of $x$. The authors considered the operator
\[e^{-\Phi}L\tilde\chi Ie^{\Phi},\]
where $\Phi = -\frac{1}{2x^2}$ (which equals $-\rho^2/2$ in the Euclidean case), $I$ is the geodesic X-ray transform,
$L$ is roughly the backprojection integrating uniformly (with respect to the Sasaki metric) along all unit tangent vectors, and $\tilde\chi$ is a fiberwise cutoff approximating a certain Gaussian. It can be shown (see Appendix \ref{sec:asymp-conic-normal}) that
\[\tilde\chi\approx e^{|p|^2-\rho^2},\]
and hence the operator of interest is, roughly speaking,
\[e^{\rho^2/2}I_0^\sharp e^{|p|^2-\rho^2} I_0e^{-\rho^2/2} = e^{-\rho^2/2}I_0^\sharp e^{|p|^2}I_0e^{-\rho^2/2}\]
where $I_0^\sharp e^{-\rho^2} = e^{-\rho^2}I_0^\sharp$ since $e^{-\rho^2}$ is independent of $v$.

In their paper, the authors showed (again, assuming $d\ge 3$) that $N$ is an elliptic pseudodifferential operator in the class $\Psi^{-1,-1}_{1c}$, the class of ``1-cusp'' pseudodifferential operators designed to provide parametrices to 1-cusp differential operators generated by $x^3\partial_x$ and $x\partial_y$. Thus $N^2$ should be the approximate inverse of some $\Psi$DO in $\Psi^{2,2}_{1c}$; note that a differential operator in this class roughly looks like
\[x^{-2}\left((x^3D_x)^2 + \sum_{j=1}^{n-1}(xD_{y_j})^2 + 1\right),\quad D = i^{-1}\partial.\]
The authors also considered a semiclassical version of this normal operator, which can give other exponentially-weighted normal operators to consider.


We now phrase our main results. Let $(\rho,\omega)\in[0,\infty)\times\mathbb{S}^{d-1}$ denote spherical coordinates in $\mathbb{R}^d$. For $k,l\in\mathbb{N}$, we consider functions of the form
\begin{equation}
\label{eq:phi}
\phi = e^{-\rho^2/2}L_k^{(l+\frac{d}{2}-1)}(\rho^2)\rho^lY_l(\omega)
\end{equation}
where $L_k^{(\alpha)}(x)$ is a generalized Laguerre polynomial (orthogonal with respect to the weight $x^\alpha e^{-x}$ on $(0,\infty)$), and $Y_l$ is a spherical harmonic satisfying
\[-\Delta_{\mathbb{S}^{d-1}}Y_l = l(l+d-2)Y_l,\]
where $-\Delta_{\mathbb{S}^{d-1}}$ is the \emph{spherical Laplacian}, i.e.\ the Laplace-Beltrami operator on $\mathbb{S}^{d-1}$. For $k,l\in\mathbb{N}$, let $V_{k,l}$ denote all functions of the form in \eqref{eq:phi}, i.e.
\begin{equation}
\label{eq:vkl}
V_{k,l} = \{e^{-\rho^2/2}L_k^{(l+\frac{d}{2}-1)}(\rho^2)\rho^lY_l(\omega)\,:\, -\Delta_{\mathbb{S}^{d-1}}Y_l = l(l+d-2)Y_l\}.
\end{equation}
These functions arise as the \emph{joint eigenfunctions} of the harmonic oscillator $-\Delta+\rho^2$ and the spherical Laplacian $-\Delta_{\mathbb{S}^{d-1}}$:
\begin{lemma}
\label{lem:eigenbasis}
For $\phi\in V_{k,l}$, we have that
\[(-\Delta+\rho^2)\phi = (4k+2l+d)\phi\quad\text{and}\quad -\Delta_{\mathbb{S}^{d-1}}\phi = l(l+d-2)\phi.\]
Moreover, by choosing an appropriate basis for the spherical harmonics $Y_l(\omega)$, one can form an orthogonal basis of $L^2(\mathbb{R}^d)$ comprising of functions in $V_{k,l}$ for $k,l\in\mathbb{N}$.
\end{lemma}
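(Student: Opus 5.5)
We separate variables in spherical coordinates. In these coordinates $-\Delta = -\partial_\rho^2 - \frac{d-1}{\rho}\partial_\rho - \rho^{-2}\Delta_{\mathbb{S}^{d-1}}$. The spherical part of the statement is then immediate: $-\Delta_{\mathbb{S}^{d-1}}$ acts only on $Y_l(\omega)$, and the radial factor is untouched, so $-\Delta_{\mathbb{S}^{d-1}}\phi = l(l+d-2)\phi$.

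For the harmonic oscillator, write $\phi = R(\rho)Y_l(\omega)$ with $R(\rho) = e^{-\rho^2/2}\rho^l u(\rho^2)$ and $u = L_k^{(\alpha)}$, where $\alpha = l+\frac d2-1$. It suffices to show
\[-R'' - \tfrac{d-1}{\rho}R' + \tfrac{l(l+d-2)}{\rho^2}R + \rho^2 R = (4k+2l+d)R.\]
We will carry out the computation with the substitution $s=\rho^2$, so that $\partial_\rho = 2\rho\partial_s$. The factor $\rho^l$ is chosen precisely to cancel the $\rho^{-2}$ centrifugal term, and the factor $e^{-\rho^2/2}$ cancels the $\rho^2$ potential. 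After these cancellations the equation reduces to
\[4\left(s u'' + (\alpha+1-s)u'\right) + 4k\,u = 0 \quad\text{with } \alpha+1 = l+\tfrac d2,\]
which is exactly the Laguerre differential equation $su''+(\alpha+1-s)u'+ku=0$. The main bookkeeping obstacle is tracking the constants so that the leftover constant term is $2l+d$; that constant comes from the cross terms of $e^{-\rho^2/2}$ with $\rho^l$ and with $\frac{d-1}{\rho}\partial_\rho$. This is routine.

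For completeness, we first recall two standard facts. $L^2(\mathbb{S}^{d-1})$ is the orthogonal direct sum of the eigenspaces $\mathcal H_l$ of spherical harmonics of degree $l$, and we choose an orthonormal basis $\{Y_{l,m}\}$ of each $\mathcal H_l$. Via polar coordinates, $L^2(\mathbb{R}^d) = \bigoplus_l L^2((0,\infty),\rho^{d-1}d\rho)\otimes \mathcal H_l$. So it suffices to show, for each fixed $l$, that $\{e^{-\rho^2/2}\rho^l L_k^{(\alpha)}(\rho^2)\}_{k\ge0}$ is an orthogonal basis of $L^2((0,\infty),\rho^{d-1}d\rho)$. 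Substituting $s=\rho^2$ gives $\rho^{d-1}d\rho = \frac12 s^{d/2-1}ds$, and the inner product becomes $\frac12\int_0^\infty L_kL_{k'}\, s^{l+d/2-1}e^{-s}\,ds$. Orthogonality is therefore exactly the Laguerre orthogonality with $\alpha = l+\frac d2-1$.

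Completeness is the genuinely nontrivial step. The map $u(s)\mapsto e^{-\rho^2/2}\rho^l u(\rho^2)$ is, up to the constant factor, a unitary equivalence between $L^2((0,\infty),s^\alpha e^{-s}ds)$ and $L^2((0,\infty),\rho^{d-1}d\rho)$. Completeness of the functions $\{e^{-\rho^2/2}\rho^l L_k^{(\alpha)}(\rho^2)\}_k$ thus follows from density of polynomials in $L^2(s^\alpha e^{-s}ds)$, which we will cite. If a self-contained argument is preferred, this density holds because the measure has exponential moments: if $g$ is orthogonal to all polynomials, then $\int g(s)e^{izs}s^\alpha e^{-s}ds$ is analytic in the strip $|\operatorname{Im} z|<1/2$ and all its derivatives vanish at $0$, so it is identically zero and hence $g=0$. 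Combining over $l$ and $m$ gives the orthogonal basis $\{\phi_{k,l,m}\}$ of $L^2(\mathbb{R}^d)$, with $\phi_{k,l,m}\in V_{k,l}$.
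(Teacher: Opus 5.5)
Your proof is correct, but for the basis statement it takes a genuinely different route from the paper. For the eigenvalue identities it is essentially the paper's computation in slightly different form. The paper conjugates by the Gaussian (Lemma \ref{lem:gaussian-inter}) and then peels off $\rho^lY_l$ using harmonicity of the solid harmonic and the product rule. You instead keep the centrifugal term $l(l+d-2)\rho^{-2}$ and let $\rho^l$ cancel it. Both arrive at $-4\bigl(su''+(l+\frac d2-s)u'\bigr)+(2l+d)u$, and your sketched reduction checks out. One small bookkeeping correction: the constant $d$ is $1+(d-1)$, and the $1$ comes from differentiating $e^{-\rho^2/2}$ twice, not only from the cross terms you list.

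The paper gets the basis statement from Lemma \ref{lem:gpeuc}. That lemma combines the algebraic identity $\gpeuc{n}=\bigoplus_{2k+l\le n}V_{k,l}$ (Gauss decomposition, then re-expanding $\rho^{2k}$ in Laguerre polynomials) with density of $\gpeuc{}$ in $L^2(\mathbb{R}^d)$, proved by a single $d$-dimensional Fourier argument. The paper leaves orthogonality implicit; it follows from symmetry of $-\Delta+\rho^2$ and $-\Delta_{\mathbb{S}^{d-1}}$ together with injectivity of $(k,l)\mapsto(4k+2l+d,\,l(l+d-2))$.

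You instead separate variables in $L^2$. This makes orthogonality explicit, via Laguerre orthogonality for the weight $s^{l+d/2-1}e^{-s}$ and orthogonality of the $\mathcal{H}_l$. It also reduces completeness to a one-variable density statement, and your analytic-strip argument for that is sound.

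Your route has two costs. First, you import completeness of spherical harmonics in $L^2(\mathbb{S}^{d-1})$, which the paper's route does not use and in fact implies. Second, you do not obtain the finite-level identity $\gpeuc{n}=\bigoplus_{2k+l\le n}V_{k,l}$, which the paper reuses in proving Theorem \ref{thm:eigenfunctions} to show that $N$ preserves each $V_{k,l}$.
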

The computations are straightforward to check, and we prove this lemma in the Appendix for full rigor.

In phrasing the asymptotics for the eigenvalues of $N$, we will phrase them in terms of the quantities
\begin{equation}
\label{eq:Lambdakl}
\Lambda_{k,l}:= 4k+2l+d+l(l+d-2) = 4k+l^2+dl+d.
\end{equation}
These are precisely the eigenvalues of the sum $(-\Delta+\rho^2)+(-\Delta_{\mathbb{S}^{d-1}})$ of the harmonic oscillator and the spherical Laplacian, since 
\[((-\Delta+\rho^2)+(-\Delta_{\mathbb{S}^{d-1}}))\phi = (4k+2l+d+l(l+d-2))\phi\]
for $\phi\in V_{k,l}$. We remark that $(-\Delta+\rho^2)+(-\Delta_{\mathbb{S}^{d-1}})$ is indeed a (weighted) 1-cusp differential operator, elliptic in the class $\Psi_{1,c}^{2,2}$. Indeed, with $x = 1/\rho$, we have
\begin{align*}
-\Delta+\rho^2-\Delta_{\mathbb{S}^{d-1}} &= x^{-2}\left(-x^2(x^2\partial_x)^2-(d-1)x^3(x^2\partial_x)-x^4\Delta_{\mathbb{S}^{d-1}} + 1 - x^2\Delta_{\mathbb{S}^{d-1}}\right) \\
&=x^{-2}\left((x^3D_x)^2+ idx^2(x^3D_x)+ 1 + (1+x^2)(-x^2\Delta_{\mathbb{S}^{d-1}})\right),
\end{align*}
with $-x^2\Delta_{\mathbb{S}^{d-1}}$ an elliptic combination of $xD_{y_1},\dots,xD_{y_{d-1}}$, where $y_1,\dots,y_{d-1}$ are local coordinates on $\mathbb{S}^{d-1}$.

Our main results concern the eigendecomposition of the Gaussian-weighted normal operator $N$:
\begin{theorem}
\label{thm:eigenfunctions}
Let $k,l\in\mathbb{N}$. If $\phi\in V_{k,l}$, then
\[N\phi = \lambda_{k,l}\phi,\]
where $\lambda_{k,l}>0$ is given by the integral
\begin{equation}
\label{eq:lambda}
\lambda_{k,l} = \sqrt{\pi}\int_{\mathbb{S}^{d-1}}(1-v_1^2)^k(1-v_1^2+iv_1v_2)^l\,d\mathbb{S}^{d-1}(v).,
\end{equation}
where $v = (v_1,v_2,\dots,v_d)$ are the standard coordinates on $\mathbb{S}^{d-1}\subset\mathbb{R}^d$.
\end{theorem}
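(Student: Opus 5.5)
The plan is to show that $N$ commutes with both the rotation group $SO(d)$ and the harmonic oscillator $-\Delta+\rho^2$. Then $N$ preserves each joint eigenspace $V_{k,l}$ of Lemma \ref{lem:eigenbasis}, and Schur's lemma forces it to act as a scalar there. Once that is done, the scalar is computed by a generating-function calculation.

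\emph{Step 1: simplify the kernel.} Write $z = p + s v$ with $p = p(z,v)$ and $s = z\cdot v$. Since $|p+tv|^2 = |p|^2+t^2$ and $|z|^2 = |p|^2+s^2$, the weights $e^{-\rho^2/2}$ and $e^{|p|^2}$ partially cancel, giving
\[
Nf(z) = \int_{\mathbb{S}^{d-1}} e^{-(z\cdot v)^2/2}\int_{\mathbb{R}} e^{-t^2/2} f(p(z,v)+tv)\,dt\,dv .
\]
So $N=\int_{\mathbb{S}^{d-1}} P_v\,dv$, where $P_v$ acts only in the $v$-direction. In that direction it maps $g(s)\mapsto e^{-s^2/2}\int e^{-t^2/2}g(t)\,dt$, which is $\sqrt{\pi}$ times the orthogonal projection onto the one-dimensional ground state $e^{-s^2/2}$.

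\emph{Step 2: commutation.} Split $-\Delta+\rho^2 = H_{v^\perp} + (-\partial_s^2+s^2)$. The operator $P_v$ commutes with $-\partial_s^2+s^2$, since it is a spectral projection of that operator. It also commutes with $H_{v^\perp}$, since it acts only in $s$. Hence every $P_v$, and therefore $N$, commutes with $-\Delta+\rho^2$ (on Schwartz functions, say). Rotation invariance of $N$ is clear from the formula, because $dv$ is rotation invariant. Consequently $N$ preserves the eigenspace of $-\Delta+\rho^2$ with eigenvalue $4k+2l+d$ and also its decomposition into $SO(d)$-isotypic pieces.

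On that eigenspace, the pieces $V_{k',l'}$ with $4k'+2l'+d = 4k+2l+d$ have distinct $l'$. They are therefore inequivalent irreducible representations (degree-$l'$ harmonics). By Schur, $N$ acts on $V_{k,l}$ by a scalar $\lambda_{k,l}$.

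\emph{Step 3: positivity.} For $\phi\in V_{k,l}$ nonzero,
\[
\lambda_{k,l}\|\phi\|^2=\langle N\phi,\phi\rangle = \int_{\mathcal G} e^{|p|^2}\,|I_0(e^{-\rho^2/2}\phi)|^2 .
\]
This is strictly positive by injectivity of $I_0$ on rapidly decaying functions.

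\emph{Step 4: identify $\lambda_{k,l}$ (main obstacle).} Take $Y_l$ to be the restriction of the harmonic polynomial $(z_2+iz_3)^l$, or $(z_1+iz_2)^l$ up to relabeling. Use the Laguerre generating function
\[
\sum_k L_k^{(\alpha)}(x)w^k=(1-w)^{-\alpha-1}e^{-xw/(1-w)} .
\]
This packages $\sum_k w^k\phi_k$ into a single Gaussian $e^{-a\rho^2}(z_2+iz_3)^l$ with $a=\tfrac{1+w}{2(1-w)}$.

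Apply $P_v$ to this Gaussian: the $t$-integral is an explicit Gaussian moment computation, producing a polynomial in $p(z,v)$ times a Gaussian. Evaluate both sides at a convenient point, e.g. along $e_1$, and integrate over $v$. Then compare coefficients of $w^k$ against $\lambda_{k,l}\phi_k$ at the same point.

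I expect the factors $(1-v_1^2)$ to come from $|p(e_1,v)|^2$. The factor $(1-v_1^2+iv_1v_2)$ should arise from the projected harmonic polynomial after combining with the Gaussian in $t$. Matching the $(1-w)$ normalizations and the coefficient extraction is where I anticipate the bulk of the algebra and the risk of error. As a fallback, I would instead compute the matrix element $\langle N\phi,\phi\rangle$ directly.
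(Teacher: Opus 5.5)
Your Steps 1--3 are correct, and Step 1 is a genuinely slicker route than the paper's. The paper obtains $[N,-\Delta+\rho^2]=0$ by intertwining $\partial_{z_i}$ with the projected fields $P_i$ on $\mathcal{G}$ and conjugating by Gaussians. Your factorization $N=\sqrt{\pi}\int_{\mathbb{S}^{d-1}}\Pi_v\,dv$, with $\Pi_v$ the projection onto the oscillator ground state in the $v$-direction, makes both the commutation and $N\ge 0$ immediate. Your isotypic form of Schur's lemma is equivalent to the paper's use of commutation with $-\Delta_{\mathbb{S}^{d-1}}$.

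The gap is in Step 4, which is where the formula \eqref{eq:lambda} has to come from. Extracting the coefficient of $w^k$ at a point $re_1$ expresses $\lambda_{k,l}L_k^{(\alpha)}(r^2)r^l$ (up to the factor $e^{-r^2/2}$, with $\alpha=l+\frac d2-1$) as a $v$-integral. That integral mixes Gaussian moments $\int e^{-t^2/(1-w)}t^{2m}\,dt$ with lower Laguerre polynomials $L_j^{(\alpha)}(|p(re_1,v)|^2)$, $j\le k$. Your plan gives no reason why this should collapse to $\sqrt{\pi}\int(1-v_1^2)^k(1-v_1^2+iv_1v_2)^l\,dv$. Indeed, you attribute the factor $(1-v_1^2+iv_1v_2)$ to the Gaussian in $t$, which is the wrong mechanism. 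Also, with $Y_l=(z_2+iz_3)^l$, evaluating along $e_1$ just gives $0=0$.

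The missing idea is to compare top-degree homogeneous parts. By your own Step 1, $e^{\rho^2/2}P_v(e^{-\rho^2/2}q)=\int_{\mathbb{R}}e^{-t^2}q(p(z,v)+tv)\,dt$. Every term carrying $t^j$ with $j\ge 1$ has degree at most $\deg q-j$ in $z$. So the degree-$n$ part of $e^{\rho^2/2}N(e^{-\rho^2/2}q)$ is $\sqrt{\pi}\int_{\mathbb{S}^{d-1}}q_n(p(z,v))\,dv$, where $q_n$ is the top homogeneous part of $q$.

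For $q=L_k^{(\alpha)}(|z|^2)(z_1+iz_2)^l$ one has $q_n=\frac{(-1)^k}{k!}|z|^{2k}(z_1+iz_2)^l$. Match top parts in the exact identity $N\phi=\lambda_{k,l}\phi$ and evaluate at $z=e_1$, where $|p|^2=1-v_1^2$ and $p_1+ip_2=1-v_1^2-iv_1v_2$. Reflecting $v_2\mapsto -v_2$ then yields \eqref{eq:lambda}.

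This is precisely the paper's argument via Corollary \ref{cor:N-leading}. It makes the generating function unnecessary, although the same top-degree-in-$r$ extraction would also rescue your $w^k$ computation. Your fallback via $\langle N\phi,\phi\rangle$ has the same defect: it produces $\lambda_{k,l}\|\phi\|^2$ as an unevaluated integral, not the stated formula.
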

\begin{remark}
While the integral above involves a complex integrand, the integral is indeed real due to symmetry. The fact that the integral is \emph{positive} is less obvious; however, this ends up following since $N$ can in fact be written as $N = (I_0^w)^*I_0^w$, where $I_0^w = e^{|p|^2/2}I_0e^{-\rho^2/2}$ is injective on $L^2(\mathbb{R}^d)$ due to the injectivity of $I_0$ on suitably decaying functions on $\mathbb{R}^d$. 
See Section \ref{sec:algebra} for more details.
\end{remark}
In addition, we study the asymptotics of these eigenvalues:
\begin{theorem}
\label{thm:eigenvalue}
With $\Lambda_{k,l}$ and $\lambda_{k,l}$ defined in \eqref{eq:Lambdakl} and \eqref{eq:lambda}, there exist constants $c,C>0$, depending only on dimension, such that
\[c\Lambda_{k,l}^{-1/2}\le \lambda_{k,l}\le C\Lambda_{k,l}^{-1/2}\text{ for all }k,l\in\mathbb{N}.\]
\end{theorem}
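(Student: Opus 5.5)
The plan is to obtain two-sided bounds on $\lambda_{k,l}$ directly from the integral formula of Theorem~\ref{thm:eigenfunctions}. I would use a Laplace/stationary-phase analysis near the equator $\{v_1=0\}$, where the integrand is concentrated. Only finitely many pairs $(k,l)$ are left outside any fixed asymptotic regime, and for those the two-sided bound is automatic from positivity $\lambda_{k,l}>0$. So it suffices to treat $k+l\ge K_0$ for a large constant $K_0$.

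\emph{Step 1: pointwise control.} Since $v_2^2\le 1-v_1^2$, we have
\[|1-v_1^2+iv_1v_2|^2=(1-v_1^2)^2+v_1^2v_2^2\le 1-v_1^2 .\]
Hence the integrand is bounded by $(1-v_1^2)^{k+l/2}$. This shows that the region $|v_1|\ge \delta$ contributes $O(e^{-c\delta^2(k+l)})$, which is negligible against any power of $\Lambda_{k,l}$. It also gives the crude bound $\lambda_{k,l}\lesssim (k+l)^{-1/2}$. That bound is sharp only when $k\gtrsim l^2$; in the regime $l^2\gg k$ the extra decay must come from oscillation in the phase.

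\emph{Step 2: local model.} Near $v_1=0$, I write $v=(v_1,v_2,v')$ and, on $\{|v_1|<\delta\}$, take the logarithm:
\[k\log(1-v_1^2)+l\log(1-v_1^2+iv_1v_2) = -(k+l)v_1^2 + i l v_1 v_2 + \tfrac{l}{2}v_1^2v_2^2 + O\big((k+l)v_1^4 + l|v_1|^3\big).\]
The real part is $\le -(k+l)v_1^2(1-v_2^2/2)+\dots\le -\tfrac12 (k+l)v_1^2$. I then integrate first in $v_1$ (with the remaining variables on the equatorial sphere $\mathbb{S}^{d-2}$ fixed, the surface measure being smooth and positive near $v_1=0$). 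The leading term is a Gaussian Fourier integral
\[\int e^{-a v_1^2 + i l v_2 v_1}\,dv_1=\sqrt{\pi/a}\,e^{-l^2v_2^2/(4a)},\qquad a=(k+l)\bigl(1-\tfrac{l}{2(k+l)}v_2^2\bigr)\sim k+l .\]
Integrating this over $v_2\in[-1,1]$ against a positive smooth density gives a positive quantity comparable to
\[(k+l)^{-1/2}\min\!\Big(1,\frac{\sqrt{k+l}}{l}\Big)\asymp \big(k+l+l^2\big)^{-1/2}\asymp \Lambda_{k,l}^{-1/2}.\]
This yields both the upper and the lower bound for the main term.

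\emph{Step 3: error terms (the main obstacle).} I must show that the remainders are $o(\Lambda_{k,l}^{-1/2})$ uniformly in the ratio $l^2/k$. In the oscillatory regime $l^2\gg k$ the main term is only of size $1/l$, so a crude absolute-value bound on the error, of order $l\,|v_1|^3$ times the Gaussian, is not enough. To handle this I would change coordinates to $v_1=\sin\alpha$, $v_2=\cos\alpha\cos\beta$, which gives
\[1-v_1^2+iv_1v_2=\cos\alpha\,(\cos\alpha+i\sin\alpha\cos\beta).\]
Then I would substitute $u=\tan\alpha\cos\beta$, so that the $l$-th power becomes exactly $\cos^{2l}\alpha\,(1+iu)^l$. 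The factor $(1+iu)^{l}$ can be treated exactly: for instance, contour-shift in $u$, or integrate by parts in $u$ using $\partial_u(1+iu)^{l+1}=i(l+1)(1+iu)^l$. Each integration by parts gains a factor $l^{-1}$ at the cost of derivatives of the smooth, Gaussian-localized amplitude $\cos^{2k+2l}\alpha\times(\text{Jacobian})$. Such derivatives cost only $\sqrt{k+l}$, and the resulting terms are then controlled by the pointwise bound of Step 1. In this way the error terms are bounded by $\Lambda_{k,l}^{-1/2}\cdot O\big((k+l)^{-1/2}+\sqrt{k+l}/l^{2}\big)$ or similar, uniformly in both regimes. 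Combined with Step 2 and the finitely-many-cases argument, this gives $c\Lambda_{k,l}^{-1/2}\le\lambda_{k,l}\le C\Lambda_{k,l}^{-1/2}$.
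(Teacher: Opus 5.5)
You have the same skeleton as the paper, and Steps 1 and 2 are essentially the paper's argument: localize to $|v_1|\le\epsilon$, use polyspherical coordinates, compute the Gaussian Fourier integral $\sqrt{\pi/a}\,e^{-l^2w_2^2/(4a)}$, then do a Laplace-type integral in $w_2$. You are also right that the cubic remainder cannot be bounded in absolute value when $k\lesssim l$.

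\textbf{Step 3 fails as described.} Put $s=\tan\alpha$ and $w_2=\cos\beta$. Then the $\alpha$-integral is exactly $\int_{\mathbb{R}}(1+s^2)^{-(k+l+d/2)}(1+isw_2)^l\,ds$, and your $u$ equals $sw_2$. At fixed $\beta$ this gives $\partial_u=w_2^{-1}\partial_s$, so amplitude derivatives cost $\sqrt{k+l}/|w_2|$, not $\sqrt{k+l}$. Equivalently, the oscillation frequency is $lw_2$ rather than $l$, and each integration by parts gains only $\sqrt{k+l}/(l|w_2|)$.

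This gain is $\gtrsim 1$ on $|w_2|\lesssim\sqrt{k+l}/l$. That is exactly where $e^{-l^2w_2^2/(4a)}$ is not small and where the whole main term of size $\Theta(l^{-1})$ lives; at $w_2=0$ there is no oscillation at all. So the uniform error bound you assert has no justification. Any contour shift would likewise have to be uniform as $w_2\to 0$, which you do not address.

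You also never say which function is integrated by parts. Your Step 2 remainder is not of the form (amplitude)$\times(1+iu)^l$. Integrating the full integral by parts could at best give the upper bound, while the lower bound needs the main term isolated with a remainder that is $o(\Lambda_{k,l}^{-1/2})$.

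\textbf{The missing idea} is the structure of the cubic term, which you throw away by writing it as $O(l|v_1|^3)$. In the paper's coordinates it is $ilR_3=ilw_2\,v_1^3(\tfrac12-\tfrac{w_2^2}{3})$, proportional to the frequency $lw_2$ itself. The paper writes $ilw_2=\partial_{v_1}\varphi_{k,l}+2(k+\tilde l(w_2))v_1$. This gives $ilR_3\,e^{\varphi_{k,l}}=v_1^3(\tfrac12-\tfrac{w_2^2}{3})\,\partial_{v_1}e^{\varphi_{k,l}}+O((k+l)v_1^4)\,e^{\varphi_{k,l}}$. A single integration by parts in $v_1$ then bounds this by $O((k+l)^{-3/2})=O(\Lambda_{k,l}^{-3/4})$ uniformly in $w_2$. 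The term is small where there is no oscillation and oscillatory where it is large, so no case split is needed.

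Alternatively, you could repair your own route by splitting at $|w_2|=K\sqrt{k+l}/l$, with $K$ a large constant:
\begin{itemize}
\item Inside, the crude bound on your Step 2 remainder contributes $O(K\,l^{-1}(k+l)^{-1/2})$.
\item Outside, apply two integrations by parts in $s$ to the whole integrand, with the correct gain $\sqrt{k+l}/(l|w_2|)$. This contributes $O((Kl)^{-1})$.
\end{itemize}
Both are a small fraction of the inner main term once $K$ is fixed large and then $k+l$ is large.
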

Putting the two results together, we have a qualitative confirmation of the result in \cite{VZ-24}. Indeed, an elliptic operator in $\Psi_{1c}^{-1,-1}$ can morally be thought of as the $-1/2$ power of an elliptic operator in $\Psi_{1c}^{2,2}$, and the result shows that on exact Euclidean space, $N\in \Psi_{1c}^{-1,-1}$ is comparable with the elliptic operator $(-\Delta+\rho^2)+(-\Delta_{\mathbb{S}^{d-1}})\in\Psi_{1c}^{2,2}$, in that they have the same eigenfunctions, where the eigenvalues of $N$ are indeed roughly the eigenvalues of $(-\Delta+\rho^2)+(-\Delta_{\mathbb{S}^{d-1}})$ raised to $-1/2$.

We mention some related literature to the work in this article:
\begin{itemize}
\item The closest work to this article is by Davison \cite{D-81}, who studied Gaussian-weighted normal operators constructed from the \emph{Radon transform} (that is, replacing the X-ray transform $I_0$ by the Radon transform $\mathcal{R}$ integrating over hyperplanes instead of lines, and replacing the X-ray backprojection $I_0^\sharp$ by the Radon backprojection $\mathcal{R}^*$ averaging over all hyperplanes through a point), and derived a Singular Value Decomposition for such weighted normal operators. We also mention the work of Cnops \cite{C-96}, who also derived a SVD for weighted Radon transforms by relating them to Dirac operators, using a Clifford algebra-based approach.
\item Integral transforms with Gaussian weights have attracted practical attention, for example in the work of Nie et al.\ \cite{NLCH-21}, who gave an analytic inversion formula and a numerical inversion algorithm for a Gaussian-weighted Radon transform in 2 dimensions; note that the Radon transform is equivalent to the X-ray transform in 2 dimensions.
\item The usage of intertwining differential operators is based on the work of Maass \cite{M-91}. Similar techniques were used in \cite{M-20,MMZ-23,EMZ-26} to derive Singular Value Decompositions for weighted X-ray transforms in the Euclidean and hyperbolic disks.
\item Finally, the upshot of this paper is to connect the spectrum of the weighted normal operator $N$ to an elliptic operator in the 1-cusp calculus. While the calculus was initially developed in \cite{VZ-24} for the purpose of inverting operators like $N$ or tensorial analogues \cite{JV-24}, it has seen recent usage \cite{HJ-26,HJ-26-pre} in describing appropriate function spaces to study the nonlinear Schr\"odinger equation or to describe scattering for Schr\"odinger operators on curved spaces.
\end{itemize}
The article is organized as follows. In Section \ref{sec:algebra}, we describe the algebraic properties of the normal operator, such as basic mapping properties (Section \ref{subsec:mapping}) and intertwining relations with the harmonic oscillator and spherical Laplacian (Section \ref{subsec:mapping}). We then prove Theorem \ref{thm:eigenfunctions} in Section \ref{subsec:eigen}, showing that the eigenfunctions are those of the form in \eqref{eq:phi}, with eigenvalues given by the integral formula \eqref{eq:lambda}. In Section \ref{sec:eigen-asymp}, we prove the asymptotics stated in Theorem \ref{thm:eigenvalue}. The article is supported by Appendix \ref{sec:computations}, which contain lemmas of supporting computations, and Appendix \ref{sec:asymp-conic-normal} justifying the connection of the normal operator $N$ studied in this article with the normal operator of interest in \cite{VZ-24}.

\subsection*{Acknowledgments}
The author would like to thank Fran\c{c}ois Monard and Andr\'as Vasy for helpful discussions related to this project.

\section{Algebraic properties of the normal operator}
\label{sec:algebra}

In this section, we establish algebraic properties of the normal operator, such as mapping and intertwining properties, along with proving Theorem \ref{thm:eigenfunctions}. We note in this section that all lemmas not directly followed by a proof are proven in Appendix \ref{sec:computations}.

\subsection{Mapping properties on Gaussian-weighted polynomials}
\label{subsec:mapping}

In this subsection, we prove some basic mapping properties of $N$.

We consider the Lebesgue measure $dz = \rho^{d-1}\,d\rho\,d\mathbb{S}^{d-1}(\omega)$ on $\mathbb{R}^d$, and the measure $dv\,dp$ on the data space, where $dv = d\mathbb{S}^{d-1}(v)$, and on each fiber $dp$ should be interpreted as the $(n-1)$-dimensional Hausdorff measure on the linear subspace $\{p\in\mathbb{R}^d\,:\,p\cdot v = 0\}$ of $\mathbb{R}^d$. Note that, with respect to the $L^2$ inner product defined by these measures, the operators $I_0$ and $I_0^\sharp$ are formally adjoint. That is, for $f:\mathbb{R}^d\to\mathbb{C}$ and $g:\mathcal{G}\to\mathbb{C}$ decaying sufficiently quickly, we have $\langle I_0f,g\rangle_{L^2(\mathcal{G}, dv\,dp)} = \langle f, I_0^\sharp g\rangle_{L^2(\mathbb{R}^d),dz}$; indeed both inner products equal
\[\int_{\mathbb{S}^{d-1}}\int_{v^\perp}\int_{\mathbb{R}} f(p+tv)\overline{g(v,p)}\,dt\,dp\,dv.\]
Define $I_0^w := e^{|p|^2/2}I_0e^{-\rho^2/2}$, initially thought of as an operator $C_c^\infty(\mathbb{R}^d)\to C_c^\infty(\mathcal{G})$.
\begin{lemma}
\label{lem:l2bounded}
$I_0^w$ is bounded $L^2(\mathbb{R}^d,dz)\to L^2(\mathcal{G},dv\,dp)$, with adjoint $(I_0^w)^*=e^{-\rho^2/2}I_0^\sharp e^{|p|^2/2}$.
\end{lemma}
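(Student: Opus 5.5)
Fix $f\in C_c^\infty(\mathbb{R}^d)$ and estimate $\|I_0^w f\|^2_{L^2(\mathcal{G})}$ directly with Cauchy--Schwarz along each line. The key identity is that on the line $z=p+tv$ with $p\perp v$ we have $\rho^2=|p|^2+t^2$. Hence
\[
I_0^w f(v,p) = e^{|p|^2/2}\int_{\mathbb{R}} e^{-(|p|^2+t^2)/2} f(p+tv)\,dt = \int_{\mathbb{R}} e^{-t^2/2}f(p+tv)\,dt .
\]
The Gaussian weight on the data side cancels exactly the $p$-dependent part of the spatial weight, and what remains is a fixed Gaussian in the line parameter $t$.

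By Cauchy--Schwarz in $t$,
\[
|I_0^wf(v,p)|^2\le \Bigl(\int_{\mathbb{R}} e^{-t^2}\,dt\Bigr)\int_{\mathbb{R}}|f(p+tv)|^2\,dt=\sqrt{\pi}\int_{\mathbb{R}}|f(p+tv)|^2\,dt.
\]
Integrate over $p\in v^\perp$. The map $(p,t)\mapsto p+tv$ is a measure-preserving bijection $v^\perp\times\mathbb{R}\to\mathbb{R}^d$, so the right side becomes $\sqrt{\pi}\|f\|^2_{L^2(\mathbb{R}^d)}$ for each $v$. Integrating over $v\in\mathbb{S}^{d-1}$ then gives
\[
\|I_0^wf\|^2\le \sqrt{\pi}\,|\mathbb{S}^{d-1}|\,\|f\|^2 .
\]
By density, $I_0^w$ extends to a bounded operator $L^2(\mathbb{R}^d)\to L^2(\mathcal{G})$. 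The same estimate, together with Fubini, shows that the defining integral converges absolutely for a.e.\ $(v,p)$ when $f\in L^2$, so the extension agrees with the formula.

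For the adjoint, take $f\in C_c^\infty(\mathbb{R}^d)$ and $g\in C_c^\infty(\mathcal{G})$. Use the formal adjointness of $I_0$ and $I_0^\sharp$ stated just before the lemma, applied to $e^{-\rho^2/2}f$ and $e^{|p|^2/2}g$; both are compactly supported and smooth, so the Fubini computation displayed there is valid. This gives
\[
\langle I_0^wf,g\rangle=\langle I_0(e^{-\rho^2/2}f),e^{|p|^2/2}g\rangle=\langle f,e^{-\rho^2/2}I_0^\sharp e^{|p|^2/2}g\rangle .
\]
Hence $(I_0^w)^*$ agrees with $e^{-\rho^2/2}I_0^\sharp e^{|p|^2/2}$ on the dense subspace $C_c^\infty(\mathcal{G})$.

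The only delicate point is to make sense of $e^{-\rho^2/2}I_0^\sharp e^{|p|^2/2}g$ for general $g\in L^2(\mathcal{G})$, since the factor $e^{|p|^2/2}$ grows. I would handle this by the same cancellation as before. For fixed $z$ and $v$, write $p=p(z,v)$; then $|p|^2=\rho^2-(z\cdot v)^2$, so
\[
e^{-\rho^2/2}e^{|p(z,v)|^2/2}=e^{-(z\cdot v)^2/2}\le 1 .
\]
The operator is therefore
\[
g\mapsto \int_{\mathbb{S}^{d-1}} e^{-(z\cdot v)^2/2}g(v,p(z,v))\,dv .
\]
Applying Cauchy--Schwarz in $v$ and then the change of variables $z=p+tv$ for each $v$, exactly as above, bounds this operator on $L^2$ by the same constant. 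So it is bounded, and equals $(I_0^w)^*$ by density.
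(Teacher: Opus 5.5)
Your proof is correct and follows the paper's own argument: the cancellation $e^{|p|^2/2}e^{-|p+tv|^2/2}=e^{-t^2/2}$, Cauchy--Schwarz in $t$, the change of variables $z=p+tv$ giving the constant $\sqrt{\pi}\,|\mathbb{S}^{d-1}|$, and the adjoint formula from the formal adjointness of $I_0$ and $I_0^\sharp$. Your extra step is valid added rigor that the paper leaves implicit: you rewrite the adjoint as $g\mapsto\int_{\mathbb{S}^{d-1}} e^{-(z\cdot v)^2/2}g(v,p(z,v))\,dv$ and show it is bounded on all of $L^2(\mathcal{G})$ with the same constant.
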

Thus we can write the weighted normal operator $N$ as
\[N=(e^{-\rho^2/2}I_0^\sharp e^{|p|^2/2})(e^{|p|^2/2}I_0e^{-\rho^2/2}) = (I_0^w)^*(I_0^w).\]
It will also be useful to note the following algebraic property. For $n\in\mathbb{N}$, let
\[\gpeuc{n} = \{e^{-\rho^2/2}q(z)\,:\,q\text{ polynomial on }\mathbb{R}^d, \deg(q)\le n\}\]
be the set of Gaussian-weighted polynomials of degree at most $n$ on $\mathbb{R}^d$, and
\[\gpg{n} = \text{span }\{e^{-|p|^2/2}q(p)r(v)\,:\,q,r\text{ polynomials on }\mathbb{R}^d, \deg(q)\le n\}\]
be the corresponding set of Gaussian-weighted polynomials on $\mathcal{G}$ where the $p$-dependence has degree at most $n$, where for $q,r$ polynomials on $\mathbb{R}^d$ the product $q(p)r(v)$ should be interpreted as the restriction to 
$\mathcal{G}$, viewed as the subset of $\mathbb{R}^d_p\times\mathbb{R}^d_v$ where $|v|^2=1$ and $v\cdot p = 0$.
We also write $\gpeuc{} = \bigcup_{n\in\mathbb{N}}\gpeuc{n}$ as the collection of all Gaussian-weighted polynomials on $\mathbb{R}^d$.

Recalling the spaces $V_{k,l}$ defined in \eqref{eq:vkl}, we have the following:
\begin{lemma}
\label{lem:gpeuc}
For each $n$, we have
\[\gpeuc{n} = \bigoplus_{2k+l\le n} V_{k,l}.\]
Moreover, $\gpeuc{}$ is dense in $L^2(\mathbb{R}^d,dz)$.
\end{lemma}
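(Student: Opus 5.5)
We prove the two inclusions $\bigoplus_{2k+l\le n}V_{k,l}\subseteq\gpeuc{n}$ and $\gpeuc{n}\subseteq\bigoplus_{2k+l\le n}V_{k,l}$ separately, and then deduce density from a completeness argument.

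\emph{First inclusion.} Take $\phi\in V_{k,l}$. Since $Y_l$ is the restriction to the sphere of a harmonic homogeneous polynomial $H_l$ of degree $l$, we have $\rho^lY_l(\omega)=H_l(z)$. Moreover $L_k^{(l+\frac d2-1)}(\rho^2)$ is a polynomial in $|z|^2$ of degree $k$. Hence $\phi=e^{-\rho^2/2}q(z)$ with $q$ a polynomial of degree $2k+l$, so $V_{k,l}\subseteq\gpeuc{n}$ whenever $2k+l\le n$. The sum is direct because the $V_{k,l}$ are mutually orthogonal: by Lemma~\ref{lem:eigenbasis} they are joint eigenspaces of the self-adjoint operators $-\Delta+\rho^2$ and $-\Delta_{\mathbb{S}^{d-1}}$, and distinct pairs $(k,l)$ give distinct pairs of eigenvalues $(4k+2l+d,\ l(l+d-2))$.

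\emph{Reverse inclusion.} I would use the classical decomposition of polynomials: every homogeneous polynomial $P$ of degree $m$ can be written uniquely as $\sum_{j\le m/2}|z|^{2j}H_{m-2j}(z)$ with each $H_{m-2j}$ harmonic and homogeneous. So $q$ of degree at most $n$ is a sum of terms $|z|^{2j}H_l(z)$ with $2j+l\le n$. Fix $l$. The polynomials $L_k^{(l+\frac d2-1)}(x)$ for $k=0,\dots,j$ have exact degrees $0,\dots,j$, so they span the polynomials in $x$ of degree at most $j$. Setting $x=\rho^2$, the monomial $\rho^{2j}$ is therefore a linear combination of $L_k^{(l+\frac d2-1)}(\rho^2)$ with $k\le j$. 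It follows that $e^{-\rho^2/2}|z|^{2j}H_l(z)\in\bigoplus_{k\le j}V_{k,l}$, and $2k+l\le 2j+l\le n$, which gives the reverse inclusion. The step needing the most care is citing the harmonic decomposition; it follows from $\mathcal{P}_m=\mathcal{H}_m\oplus|z|^2\mathcal{P}_{m-2}$, which holds by orthogonality under the Fischer inner product, and one applies this recursively.

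\emph{Density.} It suffices to show that if $f\in L^2(\mathbb{R}^d)$ is orthogonal to $e^{-\rho^2/2}z^\alpha$ for every multi-index $\alpha$, then $f=0$. Consider $F(\zeta)=\int f(z)e^{-\rho^2/2}e^{-iz\cdot\zeta}\,dz$. The Gaussian factor makes $F$ entire on $\mathbb{C}^d$, and differentiation under the integral is justified by dominated convergence. Its Taylor coefficients at $0$ are, up to constants, the pairings $\langle f,e^{-\rho^2/2}z^\alpha\rangle$ (conjugating as needed), which all vanish. Hence $F\equiv0$, so the Fourier transform of $fe^{-\rho^2/2}\in L^1$ vanishes, giving $fe^{-\rho^2/2}=0$ and therefore $f=0$.

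Alternatively, density follows from the completeness of the basis in Lemma~\ref{lem:eigenbasis}, since that basis lies in $\gpeuc{}$. I would present the Fourier argument to keep the proof self-contained.
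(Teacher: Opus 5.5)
Your proof is correct and follows essentially the same route as the paper. For the reverse inclusion you use the same Gauss (harmonic) decomposition together with the spanning of $x^j$ by $L_0^{(\alpha)},\dots,L_j^{(\alpha)}$. For density you use the vanishing of the Fourier transform of $fe^{-\rho^2/2}$, phrased through the Taylor coefficients of an entire function, whereas the paper uses the $L^2$-convergent expansion of $e^{-\rho^2/2}e^{i\zeta\cdot z}$ in $\gpeuc{}$. Your orthogonality argument also makes explicit the directness of the sum, which the paper leaves implicit.

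Keep the Fourier argument rather than your suggested alternative. In the paper, the completeness statement of Lemma~\ref{lem:eigenbasis} is itself deduced from this lemma, so that route would be circular.
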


We now note the following mapping properties on the spaces $\gpeuc{n}$:

%
\begin{lemma}
\label{lem:i0w-map}
Let $q(z)$ be a polynomial on $\mathbb{R}^d$. Then
\[I_0^w(e^{-\rho^2/2}q) = e^{-|p|^2/2}(\sqrt{\pi}q(p) + q_r(v,p)),\]
where $q_r(v,p)$ is a sum of polynomials of the form  $\tilde{q}(p)\tilde{r}(v)$ with $\deg\tilde{q}<\deg q$. Consequently, $I_0^w$ maps $\gpeuc{n}$ to $\gpg{n}$ for all $n\in\mathbb{N}$.
\end{lemma}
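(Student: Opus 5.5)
Direct computation will do it. For $z = p+tv$ with $v\cdot p=0$ we have $\rho^2 = |p|^2+t^2$, so
\[
I_0(e^{-\rho^2/2}q)(v,p) = \int_{\mathbb{R}} e^{-(|p|^2+t^2)/2}q(p+tv)\,dt = e^{-|p|^2/2}\int_{\mathbb{R}} e^{-t^2/2}q(p+tv)\,dt.
\]
Multiplying by $e^{|p|^2/2}$ and then by the weight defining $I_0^w$ (so that the overall Gaussian prefactor is $e^{-|p|^2/2}$, as in the statement), it remains to expand the remaining integral. I would write $q(p+tv) = \sum_{j} t^j q_j(p,v)$ by Taylor expanding in $t$: $q_j(p,v) = \frac{1}{j!}(v\cdot\nabla)^j q(p)$. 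Each $q_j$ is a polynomial in $(p,v)$ that is a sum of products $\tilde q(p)\tilde r(v)$ with $\deg \tilde q \le \deg q - j$. Integrating term by term against $e^{-t^2/2}$ gives Gaussian moments $m_j=\int t^j e^{-t^2/2}\,dt$, which vanish for odd $j$. Hence the result is $m_0 q(p) + \sum_{j\ge 2,\text{ even}} m_j q_j(p,v)$, and the sum has strictly lower $p$-degree. This furnishes $q_r$.

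The main point needing care is the constant in front of $q(p)$. The statement asserts $\sqrt{\pi}$, whereas the plain integral gives $m_0=\sqrt{2\pi}$. So I would recheck the normalization of the exponents in $I_0^w = e^{|p|^2/2}I_0e^{-\rho^2/2}$ and the claimed form $e^{-|p|^2/2}(\cdots)$. With the weights as literally written, the prefactor would be $1$ rather than $e^{-|p|^2/2}$. Substituting $t\mapsto t\sqrt2$ in variants where the Gaussian is $e^{-t^2}$ produces $\sqrt\pi$. I would therefore settle the precise exponents by this bookkeeping, and the structural argument above is unaffected: the leading term is a fixed Gaussian moment times $q(p)$, and the odd-moment cancellation and degree count for the remainder are unchanged.

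The consequence follows once the bookkeeping is fixed. Every $e^{-\rho^2/2}q$ with $\deg q\le n$ is mapped to $e^{-|p|^2/2}$ times a finite sum of $\tilde q(p)\tilde r(v)$ with $\deg\tilde q\le n$, restricted to $\mathcal G$. That is an element of $\gpg{n}$, by linearity and the definition of $\gpg{n}$ as a span.
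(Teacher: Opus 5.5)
Your degree argument is correct and is essentially the paper's. The paper splits $q(p+tv)=q(p)+\bigl(q(p+tv)-q(p)\bigr)$ and observes that the second piece is a sum of terms $\tilde q(p)\tilde r(v)t^j$ with $\deg\tilde q<\deg q$. Your Taylor expansion in $t$ does the same job; the vanishing of the odd moments is true but not needed.

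The genuine problem is the Gaussian bookkeeping. There you conclude that the statement is inconsistent with the definitions, but it is not: your first display computes the wrong quantity. Since $I_0^w=e^{|p|^2/2}I_0e^{-\rho^2/2}$, the weight $e^{-\rho^2/2}$ multiplies the input before $I_0$ is applied. Hence
\[
I_0^w\bigl(e^{-\rho^2/2}q\bigr)=e^{|p|^2/2}I_0\bigl(e^{-\rho^2}q\bigr),
\]
not $e^{|p|^2/2}I_0\bigl(e^{-\rho^2/2}q\bigr)$. The latter is $I_0^w q$, which is why you got prefactor $1$ and constant $\sqrt{2\pi}$. There is no further ``weight defining $I_0^w$'' to multiply by afterwards. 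Using $|p+tv|^2=|p|^2+t^2$ for $v\cdot p=0$, one gets
\[
I_0^w\bigl(e^{-\rho^2/2}q\bigr)(v,p)=e^{|p|^2/2}\int_{\mathbb{R}}e^{-|p|^2}e^{-t^2}q(p+tv)\,dt=e^{-|p|^2/2}\int_{\mathbb{R}}e^{-t^2}q(p+tv)\,dt .
\]
So the prefactor is exactly $e^{-|p|^2/2}$, and the leading coefficient is $\int_{\mathbb{R}}e^{-t^2}\,dt=\sqrt{\pi}$, as stated.

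As written, your proposal leaves both of these facts unproved. ``Settle the exponents by bookkeeping'' defers precisely the content of the displayed identity. The proposal is also internally inconsistent: your own computation gives prefactor $1$, yet your final paragraph uses the prefactor $e^{-|p|^2/2}$ to conclude membership in $\gpg{n}$. That factor is essential there, since elements of $\gpg{n}$ carry it by definition. The constant $\sqrt{\pi}$ is likewise what propagates into the leading-part formula for $N$ and the eigenvalue integral for $\lambda_{k,l}$. With the one-line correction above, the rest of your argument goes through unchanged, using the moments $\int_{\mathbb{R}} t^j e^{-t^2}\,dt$ in place of $\int_{\mathbb{R}} t^j e^{-t^2/2}\,dt$.
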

\begin{proof} From \eqref{eq:i0wf}, we write
\begin{align*}
[I_0^w(e^{-\rho^2/2}q)](v,p) &= \int_{\mathbb{R}}e^{-t^2/2}\left(e^{-(|p|^2+t^2)//2}q(p+tv)\right)\,dt \\
&= e^{-|p|^2/2}\int_{\mathbb{R}} e^{-t^2}q(p+tv)\,dt \\
&= e^{-|p|^2/2}\left(\int_{\mathbb{R}} e^{-t^2}q(p)\,dt + \int_{\mathbb{R}}e^{-t^2}(q(p+tv)-q(p))\,dt\right).
\end{align*}
The first term is $\sqrt{\pi}q(p)$, while $q(p+tv)-q(p)$ is a sum of polynomials in $p$, of degree strictly less than $\deg q$, times polynomials in $tv$. Integrating in $t$ gives the result.
\end{proof}
We now note that if $\tilde{q}$ and $\tilde{r}$ are polynomials on $\mathbb{R}^d$, then
\begin{equation}
\label{eq:backproj-mapping}
\begin{aligned}
(I_0^w)^*(e^{-|p|^2/2}\tilde{q}(p)\tilde{r}(v))(z) &= e^{-\rho^2/2}\int_{\mathbb{S}^{d-1}} \left[e^{|p|^2/2} e^{-|p|^2/2}\tilde{q}(p)\tilde{r}(v)\right](z-z\cdot v,v)\,dv \\
&= e^{-\rho^2/2}q(z),\quad q(z) = \int_{\mathbb{S}^{d-1}} \tilde{q}(z-(z\cdot v)v)\tilde{r}(v)\,dv.
\end{aligned}
\end{equation}
Moreover, $\deg q\le\deg\tilde{q}$, as $\tilde{q}(z-(z-\cdot v))\tilde{r}(v)$ is a sum of terms which are polynomials of degree at most $\deg\tilde{q}$ times a polynomial in $v$, the latter of which are integrated to constants. Consequently, we immediately have:
\begin{lemma}
\label{lem:i0w*-map}
$(I_0^w)^*$ maps $\gpg{n}$ to $\gpeuc{n}$ for all $n\in\mathbb{N}$.
\end{lemma}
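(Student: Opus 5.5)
By linearity, it suffices to treat a single generator $e^{-|p|^2/2}\tilde q(p)\tilde r(v)$ of $\gpg{n}$, where $\tilde q,\tilde r$ are polynomials on $\mathbb{R}^d$ with $\deg\tilde q\le n$. The claim is that its image under $(I_0^w)^*$ lies in $\gpeuc{n}$.

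First, compute the adjoint on this generator using Lemma \ref{lem:l2bounded}, which gives $(I_0^w)^*=e^{-\rho^2/2}I_0^\sharp e^{|p|^2/2}$. Multiplying by $e^{|p|^2/2}$ cancels the Gaussian exactly, so we are left with the polynomial function $\tilde q(p)\tilde r(v)$ on $\mathcal{G}$. Backprojecting then gives
\[(I_0^w)^*\bigl(e^{-|p|^2/2}\tilde q(p)\tilde r(v)\bigr)(z)=e^{-\rho^2/2}\int_{\mathbb{S}^{d-1}}\tilde q\bigl(z-(z\cdot v)v\bigr)\tilde r(v)\,dv,\]
as in \eqref{eq:backproj-mapping}. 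The integral converges trivially because the sphere is compact and the integrand is continuous. For the $L^2$ statement, note that the input is Schwartz in $p$, so the formal computation agrees with the bounded adjoint.

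Next, check that the integral is a polynomial in $z$ of degree at most $n$. Each component of $z-(z\cdot v)v$ is linear in $z$ with coefficients that are polynomials in $v$. Expanding $\tilde q(z-(z\cdot v)v)\tilde r(v)$ therefore writes it as a finite sum $\sum_\alpha z^\alpha c_\alpha(v)$ with $|\alpha|\le\deg\tilde q\le n$, where each $c_\alpha$ is a polynomial in $v$. Integrating over $\mathbb{S}^{d-1}$ turns each $c_\alpha$ into a constant, which gives a polynomial $q$ with $\deg q\le n$. Hence the image is $e^{-\rho^2/2}q\in\gpeuc{n}$.

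Finally, since $\gpg{n}$ is the span of such generators and $\gpeuc{n}$ is a vector space, linearity completes the proof. There is no real obstacle here. The only point needing care is that $\tilde q(p)\tilde r(v)$ is interpreted by restriction to $\mathcal{G}$, and evaluating it at $(v,p(z,v))$ is consistent with that restriction because $p(z,v)\perp v$.
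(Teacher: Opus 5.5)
Your proposal is correct and takes essentially the same route as the paper. The paper computes $(I_0^w)^*$ on a generator $e^{-|p|^2/2}\tilde q(p)\tilde r(v)$ as in \eqref{eq:backproj-mapping}, cancels the Gaussian, and observes that $\tilde q(z-(z\cdot v)v)\tilde r(v)$ expands into monomials in $z$ of degree at most $\deg\tilde q$ whose coefficients are polynomials in $v$ and integrate to constants; your added check that the formal backprojection formula agrees with the $L^2$ adjoint is a small rigor point the paper leaves implicit.
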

Combining Lemmas \ref{lem:i0w-map} and \ref{lem:i0w*-map}, as well as Equation \eqref{eq:backproj-mapping}, we obtain:
\begin{corollary}
\label{cor:N-leading}
For any $n\in\mathbb{N}$, we have that $N$ maps $\gpeuc{n}$ to $\gpeuc{n}$. Moreover, for any polynomial $q(z)$, we have
\[N(e^{-\rho^2/2}q(z)) = e^{-\rho^2/2}(q_0(z)+q_1(z)),\]
where $\deg q_1<\deg q$, and
\[q_0(z) = \sqrt{\pi}\int_{\mathbb{S}^{d-1}} q(z-(z\cdot v)v)\,dv.\]
\end{corollary}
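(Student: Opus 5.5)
We compose the two mapping results, keeping track of the leading part. Since $N=(I_0^w)^*I_0^w$, the first claim follows immediately. Lemma \ref{lem:i0w-map} gives $I_0^w(\gpeuc{n})\subset\gpg{n}$, and Lemma \ref{lem:i0w*-map} gives $(I_0^w)^*(\gpg{n})\subset\gpeuc{n}$. Hence $N(\gpeuc{n})\subset\gpeuc{n}$.

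For the leading-order formula, fix a polynomial $q$ of degree $m$. Lemma \ref{lem:i0w-map} gives
\[I_0^w(e^{-\rho^2/2}q) = e^{-|p|^2/2}\Big(\sqrt{\pi}\,q(p) + \sum_j \tilde q_j(p)\tilde r_j(v)\Big),\qquad \deg\tilde q_j<m.\]
We apply $(I_0^w)^*$ term by term, using linearity and Equation \eqref{eq:backproj-mapping}.

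The main term $\sqrt{\pi}\,e^{-|p|^2/2}q(p)$ is handled by taking $\tilde q=q$ and $\tilde r\equiv1$ in \eqref{eq:backproj-mapping}. This yields $e^{-\rho^2/2}q_0(z)$ with
\[q_0(z)=\sqrt{\pi}\int_{\mathbb{S}^{d-1}}q(z-(z\cdot v)v)\,dv.\]

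Each remainder term $e^{-|p|^2/2}\tilde q_j(p)\tilde r_j(v)$ is also handled by \eqref{eq:backproj-mapping}. It yields $e^{-\rho^2/2}$ times
\[\int_{\mathbb{S}^{d-1}}\tilde q_j(z-(z\cdot v)v)\tilde r_j(v)\,dv.\]
By the degree remark following \eqref{eq:backproj-mapping}, this is a polynomial in $z$ of degree at most $\deg\tilde q_j<m$. Summing these contributions gives $q_1$ with $\deg q_1<\deg q$.

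The only point needing care is checking that the degree bound really holds. Expanding $\tilde q_j(z-(z\cdot v)v)$, each monomial is a product of a polynomial in $z$ of degree at most $\deg \tilde q_j$ and a polynomial in $v$. Integrating in $v$ over the sphere turns the $v$-factors into constants, so the $z$-degree cannot increase. No other obstacle arises, as everything is finite linear algebra on polynomials.
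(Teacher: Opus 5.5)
Your proposal is correct and follows the paper's route: the paper obtains the corollary by composing Lemmas \ref{lem:i0w-map} and \ref{lem:i0w*-map} and applying \eqref{eq:backproj-mapping} both to the leading term $\sqrt{\pi}e^{-|p|^2/2}q(p)$ (with $\tilde r\equiv 1$) and to the lower-degree remainder terms. Your explicit degree check is the same observation the paper states right after \eqref{eq:backproj-mapping}.
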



\subsection{Intertwining Properties}
\label{subsec:intertwine}

In this subsection, we show $N$ commutes with the harmonic oscillator and the spherical Laplacian by finding explicit vector fields which intertwine weighted X-ray transforms and backprojections.

Given $A\in SO(n)$, let $R_A$ and $R_A^{\mathcal{G}}$ denote the actions on $\mathbb{R}^d$ and $\mathcal{G}$ defined by
\[R_A(z) = Az,\quad R_A^{\mathcal{G}}(v,p) = (Av,Ap).\]
Note that $R_A^{\mathcal{G}}$ does indeed define a function $\mathcal{G}\to\mathcal{G}$, since $|v|=1\iff |Av|=1$, and $v\cdot p = 0\iff (Av)\cdot (Ap) = 0$.

We note that pullbacks by $R_A$, $R_A^{\mathcal{G}}$ are intertwined through $I_0^w$ and $(I_0^w)^*$:
\begin{lemma}
\label{lem:rot-inter}
For $A\in SO(n)$, we have
\[(R_A)^*\circ(I_0^w)^* = (I_0^w)^*\circ(R_A^{\mathcal{G}})^*,\quad (R_A^{\mathcal{G}})^*\circ I_0^w = I_0^w\circ(R_A)^*,\]
where $(R_A)^*$ and $(R_A^{\mathcal{G}})^*$ are the pullbacks by $R_A$ and $R_A^{\mathcal{G}}$. Consequently,
\[(R_A)^*\circ N = N\circ (R_A)^*.\]
\end{lemma}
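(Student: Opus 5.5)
The plan is to verify the two intertwining identities by direct change of variables, using only that $A$ is orthogonal, and then to combine them. The combination uses Lemma \ref{lem:l2bounded}, which gives $N=(I_0^w)^*I_0^w$. Write $(R_A)^*f = f\circ R_A$ and $(R_A^{\mathcal{G}})^*g = g\circ R_A^{\mathcal{G}}$.

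For the second identity, take $f\in C_c^\infty(\mathbb{R}^d)$ and use the explicit formula
\[
I_0^wf(v,p)=e^{|p|^2/2}\int_{\mathbb{R}}e^{-|p+tv|^2/2}f(p+tv)\,dt .
\]
Evaluating at $(Av,Ap)$ gives
\[
e^{|Ap|^2/2}\int_{\mathbb{R}} e^{-|Ap+tAv|^2/2}f(A(p+tv))\,dt .
\]
Since $|Ap|=|p|$ and $|A(p+tv)|=|p+tv|$, this equals $I_0^w(f\circ R_A)(v,p)$. So the identity $(R_A^{\mathcal{G}})^*\circ I_0^w=I_0^w\circ (R_A)^*$ holds on $C_c^\infty(\mathbb{R}^d)$, and it extends to $L^2$ by the boundedness in Lemma \ref{lem:l2bounded}. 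Both pullbacks are unitary on $L^2$, because $dz$ is rotation invariant and $dv\,dp$ is invariant under $R_A^{\mathcal{G}}$.

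For the first identity there are two routes. One is to take adjoints of the second identity. This gives $(R_A)^{*\,*}(I_0^w)^*=(I_0^w)^*(R_A^{\mathcal{G}})^{*\,*}$. The adjoint of $(R_A)^*$ is $(R_{A^{-1}})^*$, and the adjoint of $(R_A^{\mathcal{G}})^*$ is $(R_{A^{-1}}^{\mathcal{G}})^*$. Applying the resulting identity with $A^{-1}$ in place of $A$ yields the claim. The other route is direct. Write
\[
(I_0^w)^*g(Az)=e^{-|z|^2/2}\int_{\mathbb{S}^{d-1}}e^{|p(Az,v)|^2/2}g(v,p(Az,v))\,dv .
\]
Substitute $v=Aw$ and use the invariance of $d\mathbb{S}^{d-1}$. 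Then note that $p(Az,Aw)=Az-(Az\cdot Aw)Aw=A\,p(z,w)$ and $|A\,p(z,w)|=|p(z,w)|$. The integrand becomes $e^{|p(z,w)|^2/2}\,(g\circ R_A^{\mathcal{G}})(w,p(z,w))$, which is $(I_0^w)^*((R_A^{\mathcal{G}})^*g)(z)$.

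Finally,
\[
(R_A)^*N=(R_A)^*(I_0^w)^*I_0^w=(I_0^w)^*(R_A^{\mathcal{G}})^*I_0^w=(I_0^w)^*I_0^w(R_A)^*=N(R_A)^* .
\]
No step is genuinely hard. The only care needed is bookkeeping: track pullbacks versus their inverses correctly in the adjoint argument, and confirm that the measure $dv\,dp$ on $\mathcal{G}$ is $R_A^{\mathcal{G}}$-invariant. That invariance holds because $A$ maps $v^\perp$ isometrically onto $(Av)^\perp$ and preserves $d\mathbb{S}^{d-1}$.
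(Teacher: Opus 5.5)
Your proposal is correct and follows essentially the paper's argument. Both use a direct change of variables $v=Aw$ with $p(Az,Aw)=A\,p(z,w)$ for the backprojection and $A(p+tv)=Ap+tAv$ for the X-ray transform, then compose to get $(R_A)^*N=N(R_A)^*$; the only differences are cosmetic (the paper first strips off the Gaussian weights, noting they commute with the rotation pullbacks, and checks the identities for $I_0$ and $I_0^\sharp$, whereas you carry the weights inline and also offer the adjoint route).
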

%
The fact that $N$ commutes with pullbacks by all rotations implies it commutes with the spherical Laplacian, which we show explicitly as follows:
\begin{corollary}
\label{cor:rot-commute}
If $B$ is any skew-symmetric matrix, then
\[((Bz)\cdot\nabla)\circ N = N \circ((Bz)\cdot\nabla).\]
In particular, $N$ commutes with all rotational vector fields of the form $z_i\partial_{z_j}-z_j\partial_{z_i}$.
\end{corollary}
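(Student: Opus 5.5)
The plan is to differentiate the identity $(R_A)^*\circ N = N\circ (R_A)^*$ of Lemma \ref{lem:rot-inter} along a one-parameter subgroup of rotations. Given a skew-symmetric $B$, set $A_s = e^{sB}$. Since $B^T=-B$, we have $A_s^TA_s = I$ and $\det A_s = e^{s\,\mathrm{tr} B}=1$, so $A_s\in SO(d)$ for all $s\in\mathbb{R}$. Lemma \ref{lem:rot-inter} then gives
\[
N(f\circ A_s) = (Nf)\circ A_s
\]
for all $f$ in the domain, say $f\in\gpeuc{}$, or $f\in C_c^\infty(\mathbb{R}^d)$.

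Next we differentiate in $s$ at $s=0$. For a smooth $f$, the chain rule gives
\[
\frac{d}{ds}\Big|_{s=0} f(A_sz) = \nabla f(z)\cdot Bz = ((Bz)\cdot\nabla)f(z).
\]
On the right-hand side this yields $((Bz)\cdot\nabla)(Nf)$. On the left-hand side we need to pass the derivative through $N$, which requires
\[
\frac{1}{s}\left(f\circ A_s - f\right)\to (Bz)\cdot\nabla f
\]
in a topology in which $N$ is continuous.

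The easiest route works on $\gpeuc{}$. If $f=e^{-\rho^2/2}q$ with $\deg q\le n$, then $f\circ A_s = e^{-\rho^2/2}\,q(A_s\cdot)$ stays in the finite-dimensional space $\gpeuc{n}$, because $\rho$ is rotation invariant. The coefficients of $q(A_s\cdot)$ depend smoothly on $s$. By Corollary \ref{cor:N-leading}, $N$ maps $\gpeuc{n}$ into itself, and as a linear map on a finite-dimensional space it is continuous. So differentiation commutes with $N$, and the identity $((Bz)\cdot\nabla)Nf = N((Bz)\cdot\nabla)f$ holds on $\gpeuc{}$. Note that $(Bz)\cdot\nabla$ also preserves $\gpeuc{n}$, since it annihilates $e^{-\rho^2/2}$: indeed $(Bz)\cdot z = 0$ by skew-symmetry.

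To get the identity for general $f$ (say $f\in C_c^\infty$, or in the distributional sense on $L^2$), we can instead use the $L^2$-boundedness of $N=(I_0^w)^*I_0^w$ from Lemma \ref{lem:l2bounded}. For $f\in C_c^\infty$, the difference quotients converge in $L^2$, so $N$ applied to them converges in $L^2$. Meanwhile, $\frac1s((Nf)\circ A_s - Nf)$ converges to $((Bz)\cdot\nabla)Nf$ in the sense of distributions. Uniqueness of limits then gives the identity. Alternatively, pairing with test functions and using that $N$ is self-adjoint and $(Bz)\cdot\nabla$ is skew-adjoint (it is divergence-free) gives the same result. This justification of the interchange of limit and $N$ is the only real obstacle, and the finite-dimensional argument on $\gpeuc{}$ already suffices for the later use on $V_{k,l}$.

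Finally, taking $B=E_{ji}-E_{ij}$ gives $Bz = z_ie_j - z_je_i$, so $(Bz)\cdot\nabla = z_i\partial_{z_j}-z_j\partial_{z_i}$, which proves the last claim.
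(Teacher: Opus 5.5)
Your proposal is correct and follows the paper's proof: you differentiate the identity $(R_A)^*\circ N = N\circ(R_A)^*$ from Lemma \ref{lem:rot-inter} along $A_s=e^{sB}\in SO(d)$ and apply the chain rule. Your extra argument for moving $d/ds$ through $N$ fills in a step the paper leaves implicit: on $\gpeuc{n}$ you use that this space is finite-dimensional and preserved by $N$, and for $C_c^\infty$ data you use the $L^2$-boundedness of $N$.
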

\begin{proof}
This follows by noting that $\exp(tB)\in SO(n)$ for all $t\in\mathbb{R}$ if $B$ is skew-symmetric, and that for all smooth $f$ we have
\[\frac{d}{dt}\Big|_{t=0}\left[f(\exp(tB)z)\right] = \left(\frac{d}{dt}\Big|_{t=0}(\exp(tB)z)\right)\cdot\nabla f(z) = (Bz)\cdot\nabla f(z).\]
\end{proof}
Consequently, we obtain:
\begin{proposition}
$N$ commutes with the spherical Laplacian $-\Delta_{\mathbb{S}^{d-1}}$ on $\gpeuc{}$.
\end{proposition}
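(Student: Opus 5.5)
The plan is to write the spherical Laplacian as a sum of squares of the rotational vector fields and then apply Corollary \ref{cor:rot-commute}. Concretely, for $1\le i<j\le d$ set $\Omega_{ij} = z_i\partial_{z_j}-z_j\partial_{z_i}$. The classical identity
\[
\Delta_{\mathbb{S}^{d-1}} = \sum_{1\le i<j\le d}\Omega_{ij}^2
\]
holds, where the left side acts on functions on $\mathbb{R}^d$ through the angular variable $\omega$ at fixed $\rho$. One way to see it is to expand $\sum_{i<j}\Omega_{ij}^2 = \rho^2\Delta - (z\cdot\nabla)^2-(d-2)(z\cdot\nabla)$. Then compare with the polar decomposition $\Delta = \partial_\rho^2+\frac{d-1}{\rho}\partial_\rho+\rho^{-2}\Delta_{\mathbb{S}^{d-1}}$, using $z\cdot\nabla = \rho\partial_\rho$. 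This expansion is the only real computation, and it is routine.

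Each $\Omega_{ij}$ has the form $(Bz)\cdot\nabla$ with $B = E_{ji}-E_{ij}$ skew-symmetric. So Corollary \ref{cor:rot-commute} gives $\Omega_{ij}N = N\Omega_{ij}$. Applying it twice gives $\Omega_{ij}^2 N = \Omega_{ij}N\Omega_{ij} = N\Omega_{ij}^2$, and summing over $i<j$ yields $\Delta_{\mathbb{S}^{d-1}}N = N\Delta_{\mathbb{S}^{d-1}}$.

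The point requiring care is the domain, which is why the statement is restricted to $\gpeuc{}$. We need $\Omega_{ij}$ to map $\gpeuc{}$ into itself, so that the compositions make sense and Corollary \ref{cor:rot-commute} can be applied to $\Omega_{ij}f$ for $f\in\gpeuc{}$. Since $\Omega_{ij}\rho^2=0$, we have $\Omega_{ij}(e^{-\rho^2/2}q) = e^{-\rho^2/2}\Omega_{ij}q$. Here $\Omega_{ij}q$ is a polynomial of degree at most $\deg q$, so $\Omega_{ij}$ preserves each $\gpeuc{n}$. By Corollary \ref{cor:N-leading}, $N$ also preserves $\gpeuc{n}$. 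Hence all the operators above act on the finite-dimensional space $\gpeuc{n}$, and the commutation holds there for every $n$.

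Finally, I would check that the derivation of Corollary \ref{cor:rot-commute} is legitimate on these functions. It differentiates $t\mapsto N(f\circ \exp(tB))$ at $t=0$, which requires interchanging $N$ with $\frac{d}{dt}$. For $f\in\gpeuc{n}$ this is harmless: $f\circ\exp(tB)$ stays in the finite-dimensional rotation-invariant space $\gpeuc{n}$ and depends smoothly on $t$, and $N$ restricted to $\gpeuc{n}$ is a linear map of a finite-dimensional space, so differentiation commutes with it. I expect this justification to be the only subtle step; the algebra is straightforward.
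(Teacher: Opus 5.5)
Your proof is correct and follows the paper's argument. The paper also writes $-\Delta_{\mathbb{S}^{d-1}} = -\frac{1}{2}\sum_{i,j=1}^d(z_i\partial_{z_j}-z_j\partial_{z_i})^2$, which is the same as your $\sum_{i<j}\Omega_{ij}^2$, and then applies Corollary \ref{cor:rot-commute}; your extra check that $\Omega_{ij}$ and $N$ both preserve the finite-dimensional spaces $\gpeuc{n}$ justifies the compositions and the interchange of $N$ with $\frac{d}{dt}$, which the paper leaves implicit.
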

\begin{proof}
This follows from Corollary \ref{cor:rot-commute} by writing $-\Delta_{\mathbb{S}^{d-1}} = -\frac{1}{2}\sum_{i,j=1}^d(z_i\partial_{z_j}-z_j\partial_{z_i})^2$.
\end{proof}

We now aim to show that $N$ commutes with the harmonic oscillator. Define
\begin{equation}
\label{eq:pi}
P_i:=\partial_{p_i}-v_iv\cdot\partial_p,
\end{equation}
interpreted as a differential operator on $\mathcal{G}$. Interpreting the fiber of $\mathcal{G} = T\mathbb{S}^{d-1}$ over $v$ as $v^\perp\subset\mathbb{R}^d$, then the vector field $P_i$ is the projection of the vector field $\partial_{p_i}$ on $\mathbb{R}^d$ onto the subspace $v^\perp$. Note then that
\[\sum_{i=1}^d p_iP_i = \sum_{i=1}^d p_i\partial_{p_i} - p_iv_i(v\cdot\partial_p) = p\cdot\partial_p - (p\cdot v)(v\cdot\partial_p) = p\cdot\partial_p\]
since $p\cdot v = 0$. In addition, since $|v|^2=1$, we also have
\[\sum_{i=1}^d v_iP_i = \sum_{i=1}^d v_i\partial_{p_i} - \sum_{i=1}^d v_i^2v\cdot\partial_p = v\cdot\partial_p-1(v\cdot\partial_p) = 0\]
Finally, if we define $\Delta_p := \sum_{i=1}^d P_i^2$, then
\[\Delta_p = \sum_{i=1}^d (\partial_{p_i}-v_iv\cdot\partial_p)^2 = \sum_{i=1}^d \partial_{p_i}^2 - 2v_i\partial_{p_i}(v\cdot\partial_p) + v_i^2(v\cdot\partial_p)^2 = \Delta_{\mathbb{R}^d_p} - (v\cdot\partial_p)^2,\]
and since $v$ is a unit, normal vector for $v^\perp$ we see that the above is the Laplace-Beltrami operator on $v^\perp$, interpreted as a submanifold of $\mathbb{R}^d$.

We now establish the following intertwining relationships, valid on Schwarz spaces:
\begin{lemma}
\label{lem:pi-inter}
For the X-ray transform $I_0$, we have
\[P_i\circ I_0 = I_0\circ \partial_{z_i},\quad (p\cdot\partial_p)\circ I_0 = I_0\circ(\rho\partial_\rho+1),\]
and for the backprojection $I_0^\sharp$, we have
\[\partial_{z_i}\circ I_0^\sharp = I_0^\sharp\circ P_i,\quad \rho\partial_{\rho}\circ I_0^\sharp = I_0^\sharp\circ(p\cdot\partial_p).\]
\end{lemma}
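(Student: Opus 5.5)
Each identity will be proved by direct differentiation under the integral sign for Schwartz functions, using the parametrizations of $I_0$ and $I_0^\sharp$ given in the introduction. It is convenient to extend functions on $\mathcal{G}$ off $\mathcal{G}$. For $I_0f$ this is done by the formula $F(v,p)=\int_{\mathbb{R}} f(p+tv)\,dt$, defined for all $p\in\mathbb{R}^d$. This extension satisfies $(v\cdot\partial_p)F=\int \frac{d}{dt}f(p+tv)\,dt=0$, so it is invariant under translating $p$ along $v$. Since $P_i$ and $p\cdot\partial_p$ are tangent to the fibers $v^\perp$ (the latter because $p\cdot v=0$ on $\mathcal{G}$), they may be computed using any extension in $p$.

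\textbf{The identities for $I_0$.} Using the extension $F$, we get $P_iF=\partial_{p_i}F-v_i(v\cdot\partial_p)F=\partial_{p_i}F=\int(\partial_{z_i}f)(p+tv)\,dt=I_0(\partial_{z_i}f)$. For the dilation identity we compute
\[
p\cdot\partial_pF=\int (p\cdot\nabla f)(p+tv)\,dt=\int\big((z\cdot\nabla f)(p+tv)-t\,\tfrac{d}{dt}f(p+tv)\big)\,dt .
\]
Integrating the second term by parts gives $+\int f(p+tv)\,dt$, and $z\cdot\nabla=\rho\partial_\rho$. This yields $I_0(\rho\partial_\rho f+f)$.

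\textbf{The identities for $I_0^\sharp$.} Extend $g$ to a function of $(v,p)$ with $p\in\mathbb{R}^d$ arbitrary. Then $I_0^\sharp g(z)=\int g(v,\Pi_v z)\,dv$, where $\Pi_v=\mathrm{Id}-v v^T$. By the chain rule,
\[
\partial_{z_i}\big[g(v,\Pi_vz)\big]=\sum_j(\Pi_v)_{ji}(\partial_{p_j}g)(v,\Pi_vz)=(\partial_{p_i}g-v_i\,v\cdot\partial_pg)(v,\Pi_vz)=(P_ig)(v,\Pi_v z),
\]
and integrating in $v$ gives $\partial_{z_i}I_0^\sharp=I_0^\sharp P_i$. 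For the dilation we have $z\cdot\nabla_z[g(v,\Pi_vz)]=\frac{d}{ds}\big|_{s=1}g(v,s\Pi_vz)=(p\cdot\partial_pg)(v,\Pi_vz)$, which gives $\rho\partial_\rho I_0^\sharp=I_0^\sharp(p\cdot\partial_p)$.

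\textbf{Consistency check via adjoints.} The backprojection identities should also follow from the $I_0$ identities by duality, and this provides a check. With respect to $dv\,dp$, the adjoint of $P_i$ is $-P_i$, because it is a fiberwise derivative in a direction of $v^\perp$ and $v$ is fixed. The adjoint of $p\cdot\partial_p$ is $-p\cdot\partial_p-(d-1)$, while the adjoint of $\rho\partial_\rho$ is $-\rho\partial_\rho-d$. Taking adjoints of $(p\cdot\partial_p)I_0=I_0(\rho\partial_\rho+1)$ gives $I_0^\sharp(p\cdot\partial_p+d-1)=(\rho\partial_\rho+d-1)I_0^\sharp$, which agrees with the direct computation. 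The $P_i$ identity needs the most care: we must check that the formula $P_i=\partial_{p_i}-v_iv\cdot\partial_p$ gives the genuine fiber derivative independently of the extension, which holds because $P_i$ is tangent to $v^\perp$. Apart from this, the only obstacle is justifying differentiation under the integral sign, and that is routine for Schwartz functions.
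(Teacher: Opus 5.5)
Your proof is correct and follows essentially the same route as the paper. For $I_0$ you differentiate under the integral sign, use $\int v\cdot\nabla f(p+tv)\,dt=0$, and integrate by parts in $t$; for $I_0^\sharp$ you apply the chain rule through $z\mapsto z-(z\cdot v)v$. The only differences are cosmetic: you get the backprojection dilation identity from the scaling $s\mapsto g(v,s\Pi_v z)$ instead of the paper's decomposition $z=p+(z\cdot v)v$ together with $\sum_i v_iP_i=0$, and your adjoint consistency check is an extra the proof does not need.
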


With these relationships established, we prove (with the help of Lemma \ref{lem:gaussian-inter} from the Appendix):
\begin{proposition}
We have
\[(-\Delta+\rho^2)\circ N = N\circ (-\Delta+\rho^2)\]
when acting on $\gpeuc{}$.
\end{proposition}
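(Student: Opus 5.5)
Write $N = (I_0^w)^*I_0^w$ with $I_0^w = e^{|p|^2/2}I_0e^{-\rho^2/2}$. The plan is to conjugate the intertwining relations of Lemma \ref{lem:pi-inter} through the Gaussian weights. This gives a ``data-side harmonic oscillator'' $H_{\mathcal{G}}$ satisfying
\[
I_0^w\circ(-\Delta+\rho^2) = H_{\mathcal{G}}\circ I_0^w
\quad\text{and}\quad
(-\Delta+\rho^2)\circ (I_0^w)^* = (I_0^w)^*\circ H_{\mathcal{G}}
\]
on Gaussian-weighted polynomials. Chaining the two gives the result. Splitting $N$ this way is natural because each factor intertwines separately, and the weights $e^{\pm\rho^2/2}$, $e^{\pm|p|^2/2}$ are exactly Gaussians of the type handled by ground-state conjugation.

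\textbf{Step 1: conjugate the flat derivatives.} First I would record the conjugations, which is presumably the content of Lemma \ref{lem:gaussian-inter}:
\[
e^{\rho^2/2}\partial_{z_i}e^{-\rho^2/2} = \partial_{z_i}-z_i,
\qquad
e^{|p|^2/2}P_ie^{-|p|^2/2} = P_i - p_i .
\]
The second uses $P_i|p|^2 = 2p_i - 2v_i(v\cdot p) = 2p_i$.

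\textbf{Step 2: factor the oscillator and intertwine the factors.} Write
\[
-\Delta+\rho^2 = -\sum_i(\partial_{z_i}-z_i)(\partial_{z_i}+z_i) - d ,
\qquad
e^{\rho^2/2}(-\Delta+\rho^2)e^{-\rho^2/2} = -\Delta + 2\rho\partial_\rho + d .
\]
From Lemma \ref{lem:pi-inter} we have $I_0\partial_{z_i} = P_iI_0$ and $I_0(\rho\partial_\rho+1) = (p\cdot\partial_p)I_0$. Combining these gives
\[
I_0\bigl(-\Delta+2\rho\partial_\rho+d\bigr) = \bigl(-\Delta_p + 2p\cdot\partial_p + d-2\bigr)I_0 .
\]
Conjugating by $e^{|p|^2/2}$ then yields $I_0^w(-\Delta+\rho^2) = H_{\mathcal{G}}I_0^w$ with
\[
H_{\mathcal{G}} := e^{|p|^2/2}\bigl(-\Delta_p+2p\cdot\partial_p+d-2\bigr)e^{-|p|^2/2}.
\]
I would simplify this explicitly using $\sum_i p_iP_i = p\cdot\partial_p$, $\sum_i v_iP_i=0$, and the fact that $\Delta_p$ is the Laplacian on $v^\perp$ (dimension $d-1$). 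I expect $H_{\mathcal{G}} = -\Delta_p + |p|^2 + 1$, possibly up to an additive constant that must match on both sides.

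\textbf{Step 3: the backprojection side.} Use $\partial_{z_i}I_0^\sharp = I_0^\sharp P_i$ and $\rho\partial_\rho I_0^\sharp = I_0^\sharp(p\cdot\partial_p)$, and run the same computation in the other order. This should give $(-\Delta+\rho^2)(I_0^w)^* = (I_0^w)^*\widetilde H_{\mathcal{G}}$. The point to check is that $\widetilde H_{\mathcal{G}} = H_{\mathcal{G}}$. Formally this is forced by adjointness, since $-\Delta+\rho^2$ is symmetric and $H_{\mathcal{G}}$ should be symmetric for $dv\,dp$. Still, I would verify it directly: the shifts $+1$ in Lemma \ref{lem:pi-inter} appear on opposite sides of the two relations, and the constants must come out equal.

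\textbf{Main obstacle: bookkeeping of the constants and cross terms.} The delicate part is the constant terms and the cross terms $P_ip_i$ versus $p_iP_i$. In particular, $\sum_i P_i p_i = d-1 + p\cdot\partial_p$, because $P_ip_i = 1 - v_i^2$ summed over $i$ gives $d-1$. These must be tracked so that the $I_0$-side and $I_0^\sharp$-side operators coincide exactly.

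\textbf{Conclusion.} All manipulations are valid on $\gpeuc{}$ and $\gpg{}$: these spaces consist of Schwartz functions, so Lemma \ref{lem:pi-inter} applies, and by Lemmas \ref{lem:i0w-map} and \ref{lem:i0w*-map} they are preserved by $I_0^w$ and $(I_0^w)^*$. Therefore
\[
(-\Delta+\rho^2)N = (I_0^w)^*H_{\mathcal{G}}I_0^w = N(-\Delta+\rho^2)
\]
on $\gpeuc{}$.
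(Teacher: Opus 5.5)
Your architecture is the paper's: find one data-side oscillator $H_{\mathcal{G}}$ with which both $I_0^w$ and $(I_0^w)^*$ intertwine $-\Delta+\rho^2$, then chain the two relations. However, Step 2, the only step you actually carry out, uses the wrong Gaussian conjugation, and the identity it produces is false.

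The relation $e^{\rho^2/2}(-\Delta+\rho^2)e^{-\rho^2/2}=-\Delta+2\rho\partial_\rho+d$ is the right one when the oscillator stands to the \emph{left} of $e^{-\rho^2/2}$. That is the backprojection side, $(-\Delta+\rho^2)e^{-\rho^2/2}I_0^\sharp$. In $I_0^w(-\Delta+\rho^2)=e^{|p|^2/2}I_0\,e^{-\rho^2/2}(-\Delta+\rho^2)$ the oscillator stands to the right. There you need $e^{-\rho^2/2}(-\Delta+\rho^2)e^{\rho^2/2}=-\Delta-2\rho\partial_\rho-d$.

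What your manipulation actually yields is an intertwining for $e^{|p|^2/2}I_0e^{+\rho^2/2}$, not for $I_0^w$. Consequently your $H_{\mathcal{G}}$ is the wrong operator. Expanding it gives $-\Delta_p+4p\cdot\partial_p-3|p|^2+2d-3$. This is not $-\Delta_p+|p|^2+1$ up to a constant, and it is not even symmetric.

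The ground state already refutes the claimed identity. On one side, $I_0^w(-\Delta+\rho^2)e^{-\rho^2/2}=d\sqrt{\pi}\,e^{-|p|^2/2}$. On the other, your $H_{\mathcal{G}}$ applied to $I_0^we^{-\rho^2/2}=\sqrt{\pi}e^{-|p|^2/2}$ gives $\sqrt{\pi}(3d-4-8|p|^2)e^{-|p|^2/2}$. Separately, your factorization constant is off: $-\Delta+\rho^2=-\sum_i(\partial_{z_i}-z_i)(\partial_{z_i}+z_i)+d$. You never use it, though.

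The repair is short.
\begin{itemize}
\item \textbf{X-ray side.} Lemma \ref{lem:pi-inter} gives $I_0(-\Delta-2\rho\partial_\rho-d)=(-\Delta_p-2p\cdot\partial_p-(d-2))I_0$. Your own tools, $e^{|p|^2/2}P_ie^{-|p|^2/2}=P_i-p_i$ and $\sum_iP_ip_i=d-1+p\cdot\partial_p$, then give $e^{|p|^2/2}(-\Delta_p-2p\cdot\partial_p-(d-2))e^{-|p|^2/2}=-\Delta_p+|p|^2+1$.
\item \textbf{Backprojection side (your Step 3, which you left undone).} The relation you wrote in Step 2 is exactly the right one here: $(-\Delta+\rho^2)(I_0^w)^*=e^{-\rho^2/2}I_0^\sharp(-\Delta_p+2p\cdot\partial_p+d)e^{|p|^2/2}$. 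There is no $-2$, because the backprojection dilation identity has no shift. Moreover, $e^{-|p|^2/2}(-\Delta_p+2p\cdot\partial_p+d)e^{|p|^2/2}=-\Delta_p+|p|^2+1$ as well.
\end{itemize}
This is precisely the paper's computation.

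Alternatively, once you have the correct $H_{\mathcal{G}}=-\Delta_p+|p|^2+1$, your adjointness shortcut for Step 3 is legitimate. $H_{\mathcal{G}}$ is fiberwise symmetric on $L^2(\mathcal{G},dv\,dp)$ and preserves Gaussian-weighted polynomials. Pairing against the dense set $\gpeuc{}$ then gives the backprojection relation from the X-ray one.
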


\begin{proof}
From Lemma \ref{lem:gaussian-inter},
\[e^{\rho^2/2}\circ(-\Delta+\rho^2)\circ e^{-\rho^2/2} = -\Delta+2\rho\partial_{\rho} + d-\rho^2 + \rho^2 = -\Delta+2\rho\partial_\rho+d,\]
i.e.\ $(-\Delta+\rho^2)\circ e^{-\rho^2/2} = e^{-\rho^2/2}\circ(-\Delta+2\rho\partial_{\rho}+d)$.
Thus, if we try to intertwine $-\Delta+\rho^2$ through $I_0^w$, we have
\begin{equation}
\label{eq:osc-inter-1}
\begin{aligned}
(-\Delta+\rho^2)\circ(I_0^w)^* &= (-\Delta+\rho^2)\circ e^{-\rho^2/2}\circ I_0^\sharp\circ e^{|p|^2/2} \\
&= e^{-\rho^2/2}\circ(-\Delta+2\rho\partial_{\rho}+d)\circ I_0^\sharp \circ e^{|p|^2/2} \\
&= e^{-\rho^2/2}\circ I_0^\sharp\circ(-\Delta_p+2p\cdot\partial_p+d)\circ e^{|p|^2/2},
\end{aligned}
\end{equation}
where $-\Delta\circ I_0^\sharp = I_0^\sharp\circ -\Delta_p$ follows by writing $\Delta = \sum_{i=1}^d\partial_{z_i}^2$ and $\Delta_p = \sum_{i=1}^dP_i^2$. 

Next, we note that
\[e^{-|p|^2/2}\circ(-\Delta_p)\circ e^{|p|^2/2} = -\Delta_p-2p\cdot\partial_p-(d-1)-|p|^2\]
by applying Lemma \ref{lem:gaussian-inter}, interpreting $\Delta_p$ and $p\cdot\partial_p$ as the Laplacian and the dilation vector field on the $(d-1)$-dimensional Euclidean space $v^\perp\subset\mathbb{R}^d$. Lemma \ref{lem:gaussian-inter} also gives $e^{-|p|^2/2}\circ(p\cdot\partial_p)\cdot e^{|p|^2/2} = p\cdot\partial_p+|p|^2$, so
\begin{align*}
e^{-|p|^2/2}\circ(-\Delta_p+2p\cdot\partial_p+d)\circ e^{|p|^2/2} &= -\Delta_p-2p\cdot\partial_p-(d-1)-|p|^2 + 2(p\cdot\partial_p+|p|^2) + d \\
&=-\Delta_p + |p|^2 + 1.
\end{align*}
Thus, $(-\Delta_p+2p\cdot\partial_p+d)\circ e^{|p|^2/2}= e^{|p|^2/2}\circ(-\Delta_p + |p|^2 + 1)$, so \eqref{eq:osc-inter-1} gives
\begin{equation}
\label{eq:osc-inter-2}
(-\Delta+\rho^2)\circ (I_0^w)^* = (I_0^w)^*\circ (-\Delta_p+|p|^2+1).
\end{equation}
We now try to intertwine the last factor across $I_0^w$. To do so, we note similarly to above that
\[e^{-|p|^2/2}\circ (-\Delta_p+|p|^2+1)\circ e^{|p|^2/2} = -\Delta_p-2p\cdot\partial_p-(d-2),\]
so
\begin{align*}
(-\Delta_p+|p|^2+1)\circ e^{|p|^2/2}\circ I_0 &= e^{|p|^2/2}\circ(-\Delta_p-2p\cdot\partial_p-(d-2))\circ I_0 \\
&=e^{|p|^2/2}\circ I_0\circ(-\Delta - 2\rho\partial_\rho - d).
\end{align*}
Composing both sides with $e^{-\rho^2/2}$ thus gives 
\begin{align*}
(-\Delta_p+|p|^2+1)\circ I_0^w &= e^{|p|^2/2}\circ I_0\circ(-\Delta-2\rho\partial_\rho-d)\circ e^{-\rho^2/2} \\
&= I_0^w\circ e^{\rho^2/2}\circ(-\Delta-2\rho\partial_\rho-d)\circ e^{-\rho^2/2}.
\end{align*}
Furthermore,
\[e^{\rho^2/2}\circ(-\Delta-2\rho\partial_\rho-d)\circ e^{-\rho^2/2}=(-\Delta+2\rho\partial_\rho+d-\rho^2)-2(\rho\partial_\rho-\rho^2)-d = -\Delta+\rho^2.\]
Putting the two equations together thus gives
\begin{equation}
\label{eq:osc-inter-3}
(-\Delta_p+|p|^2+1)\circ I_0^w = I_0^w\circ(-\Delta+\rho^2).
\end{equation}
Combining \eqref{eq:osc-inter-2} and \eqref{eq:osc-inter-3} thus yields $(-\Delta+\rho^2)\circ N = N\circ(-\Delta+\rho^2)$, as desired.

\end{proof}

%


\subsection{Proof of Theorem \ref{thm:eigenfunctions}}
\label{subsec:eigen}
We now prove that the eigenfunctions of $N$ are functions of the form in \eqref{eq:phi}, with eigenvalues given by $\lambda_{k,l}$ defined in \eqref{eq:lambda}.
\begin{proof}[Proof of Theorem \ref{thm:eigenfunctions}]
We first claim that $N$ maps $V_{k,l}$ to itself, for any $k,l\in\mathbb{N}$. To see this,
let $n = 2k+l$, so that $V_{k,l}\subset\gpeuc{n}$. Then $N$, $-\Delta+\rho^2$, and $-\Delta_{\mathbb{S}^{d-1}}$ all act on $\gpeuc{n}$. Moreover,
\[(-\Delta+\rho^2)|_{V_{k,l}} = (4k+2l+d)\text{Id}|_{V_{k,l}},\quad (-\Delta_{\mathbb{S}^{d-1}})|_{V_{k,l}} = l(l+d-2)\text{Id}|_{V_{k,l}}.\]
Since $\gpeuc{n} = \bigoplus_{2k'+l'\le n}V_{k',l'}$, and $(k,l)\mapsto (4k+2l+d,l(l+d-2))$ is injective, it follows that $V_{k,l}$ is the maximal joint eigenspace in $\gpeuc{n}$ of $(-\Delta+\rho^2,-\Delta_{\mathbb{S}^{d-1}})$ with eigenvalues $(4k+2l+d,l(l+d-2))$.
Since $N$ commutes with $-\Delta+\rho^2$ and $-\Delta_{\mathbb{S}^{d-1}}$, it follows that $N$ must preserve the eigenspaces of $-\Delta+\rho^2$ and $-\Delta_{\mathbb{S}^{d-1}}$ acting on $\gpeuc{n}$. Hence, $N$ must map $V_{k,l}$ into itself.

We now note that the special orthogonal group $SO(d)$ acts on each $V_{k,l}$ via composition, i.e.\ $V_{k,l}$ is a representation of $SO(d)$. Moreover, since all functions in $V_{k,l}$ have the same radial part, the representation is isomorphic to the standard spherical harmonics representation of $SO(d)$, which is irreducible (see e.g.\ \cite{S-book} Section 2.3). It follows by Schur's lemma that $N|_{V_{k,l}} = \lambda_{k,l}\text{Id}|_{V_{k,l}}$ for some number $\lambda_{k,l}$, i.e. any $\phi\in V_{k,l}$ is an eigenfunction of $N$, with an eigenvalue $\lambda_{k,l}$ dependent only on $k$ and $l$ (and not the specific choice of $\phi\in V_{k,l}$). Moreover, since $N = (I_0^w)^*I_0^w$, with $I_0^w$ injective\footnote{This follows since $I_0^w = e^{|p|^2/2}I_0e^{-\rho^2/2}$, and $I_0$ is injective on functions of rapid decay.} on $L^2$, it follows that $\lambda_{k,l}>0$ for all $k,l$.

To compute $\lambda_{k,l}$, it suffices to compute $N\phi$ for \emph{any} nonzero $\phi\in V_{k,l}$, and see what multiple $\lambda_{k,l}$ satisfies $N\phi = \lambda_{k,l}\phi$. We will choose the spherical harmonic $Y_l$ to be $(z_1+iz_2)^l$ restricted to $\mathbb{S}^{d-1}$, in which case $\rho^lY_l(\omega) = (z_1+iz_2)^l$. Hence, the overall eigenfunction is
\[\phi = e^{-\rho^2/2}q(z),\quad\text{where }q(z) = L_k^{(l+d/2-1)}(|z|^2)(z_1+iz_2)^l.\]
Notice that $\deg q = n:= 2k+l$. Since $|z-(z\cdot v)v|^2 = |z|^2-(z\cdot v)^2$, Corollary \ref{cor:N-leading} gives that $\lambda_{k,l}\phi = N\phi = e^{-\rho^2/2}q_0(z)$ modulo $\gpeuc{n-1}$, where
\[q_0(z) = \sqrt{\pi}\int_{\mathbb{S}^{d-1}} L_k^{(l+d/2-1)}(|z|^2-(z\cdot v)^2)(z_1+iz_2 - (z\cdot v)(v_1+iv_2))^l\,dv.\]
It follows that $q_0(z) = \lambda_{k,l}q(z)$ modulo a polynomial of degree at most $n-1$. Moreover, if we renormalize $L_k^{(l+d/2-1)}(x)$ to have leading coefficient $1$, and we let
\begin{align*}
\tilde{q}(z) &= |z|^{2k}(z_1+iz_2)^l, \\
\tilde{q}_0(z) &= \sqrt{\pi}\int_{\mathbb{S}^{d-1}} (|z|^2-(z\cdot v)^2)^k(z_1+iz_2 - (z\cdot v)(v_1+iv_2))^l\,dv
\end{align*}
(i.e.\  we replace the $L_k^{(l+n/2-1)}(x)$ part in the definition of $q$ and $\tilde{q}$ by the polynomial $x^k$), then $q$ and $\tilde{q}$ have the same leading degree terms, and similarly for $q_0$ and $\tilde{q}_0$. Consequently, $q-\tilde{q}$ and $q_0-\tilde{q}_0$ are both degree at most $n-1$, so we also have $\tilde{q}_0(z) = \lambda_{k,l}\tilde{q}(z)$ modulo a polynomial of lower degree. Noting that both $\tilde{q}$, $\tilde{q}_0$ are homogeneous of degree $n$, it follows that we actually have the exact equality $\tilde{q}_0(z) = \lambda_{k,l}\tilde{q}(z)$. Finally, noting that $\tilde{q}(1,0,\dots,0) = 1$, it follows that $\tilde{q}_0(1,0,\dots,0) = \lambda_{k,l}\cdot 1$, and hence
\[
\lambda_{k,l} = \tilde{q}_0(1,0,\dots,0) = \sqrt{\pi}\int_{\mathbb{S}^{d-1}} (1-v_1^2)^k(1-v_1(v_1+iv_2))^l\,dv.
\]
This proves Theorem \ref{thm:eigenfunctions}.
\end{proof}

\section{Estimating the eigenvalues}
\label{sec:eigen-asymp}

In this section we study the asymptotics of
\[\lambda_{k,l} = \sqrt{\pi}\int_{\mathbb{S}^{d-1}}(1-v_1^2)^k(1-v_1^2+iv_1v_2)^l\,d\mathbb{S}^{d-1}(v)\]
where $k,l\in\mathbb{N}$, and $d\ge 3$. In this section, we will use the Big-Theta notation, where $f(x) = \Theta(g(x))$ if both $f(x)=O(g(x))$ and $g(x)=O(f(x))$. We aim to show that $\lambda_{k,l} = \Theta(\Lambda_{k,l}^{-1/2})$, where 
$\Lambda_{k,l}= 4k+2l+d+l(l+d-2)$.
Note that if $l^2\le k$, then $\Lambda_{k,l}=\Theta(k)$, while if $l^2\ge k$, then $\Lambda_{k,l}=\Theta(l^2)$. So, equivalently, we claim that if $l^2\le k$, then $\lambda_{k,l} = \Theta(k^{-1/2})$, while if $l^2\ge k$, then $\lambda_{k,l} = \Theta(l^{-1})$.

It turns out that terms of the form $k+l/2$ will appear frequently below, so we note that $k+l/2 = O(\Lambda_{k,l})$, while for $(k,l)\ne (0,0)$ we have $\Lambda_{k,l}^{1/2}=O(k+l/2)$. Consequently, for any $r>0$, we have $(k+l/2)^{-r} = O(\Lambda_{k,l}^{-r/2})$ for $k+l/2$ sufficiently large.

We go ahead and estimate the integral. We first note that
\[|1-v_1^2+iv_1v_2|^2 = (1-v_1^2)^2+v_1^2v_2^2\le (1-v_1^2)^2+v_1^2(1-v_1^2) = 1-v_1^2,\]
where the inequality follows from the fact that $v_2^2\le 1-v_1^2$ on the sphere. Hence, the integrand is bounded in absolute value by $(1-v_1^2)^{k+l/2}$. We note that if we parametrize $\mathbb{S}^{d-1}$, minus the two poles in the $v_1$ direction, by polyspherical coordinates $(v_1,w)\in(-1,1)\times\mathbb{S}^{d-2}$, via
\[(v_1,w)\mapsto\left(v_1,\sqrt{1-v_1^2}w\right),\]
then
\[d\mathbb{S}^{d-1}(v) = (1-v_1^2)^{(d-3)/2}\,dv_1\,d\mathbb{S}^{d-2}(w).\]

We go ahead and make this change of coordinates, writing $w = (w_2,w_3,\dots,w_n)$ with $w_2^2+\dots+w_n^2=1$. Then $v_2 = \sqrt{1-v_1^2}w_2$. The integral then becomes
\[\sqrt{\pi}\int_{\mathbb{S}^{d-2}}\int_{-1}^1 (1-v_1^2)^{k+(d-3)/2}\left(1-v_1^2+iv_1\sqrt{1-v_1^2}w_2\right)^l\,dv_1\,d\mathbb{S}^{d-2}(w).\]
We now try to estimate the inner integral, for a fixed $w\in\mathbb{S}^{d-2}$. First, we may restrict to $|v_1|\le\epsilon$ for a fixed small $\epsilon>0$, since the integrand is still bounded by $(1-v_1^2)^{k+(l+d-3)/2}$, which decays exponentially in $k+l/2$, and hence is $O(\Lambda_{k,l}^{-\infty})$ (i.e.\ $O(\Lambda_{k,l}^{-N})$ for all positive $N$), for $|v_1|>\epsilon$. Within this region, we can use the Taylor expansions $\sqrt{1-x^2} = 1-\frac{x^2}{2}+O(x^4)$ and $\log(1+x) = x-\frac{x^2}{2}+\frac{x^3}{3}+O(x^4)$ (with the $O(x^4)$ terms uniform on $|x|\le\epsilon$) to see that
\begin{align*}
    \log(1-v_1^2+iv_1\sqrt{1-v_1^2}w_2) &= iv_1v_2 - v_1^2\left(1-\frac{w_2^2}{2}\right) + iR_3(v_1,w_2) + R_4(v_1,w_2), \\
    \log(1-v_1^2) &=-v_1^2 + \tilde{R}_4(v_1),
\end{align*}
where $R_3(v_1,w_2) = v_1^3\left(\frac{w_2}{2}-\frac{w_2^3}{3}\right)$, and $R_4$ and $\tilde{R}_4$ are both $O(v_1^4)$, uniform in $|v_1|\le\epsilon$ and $|v_2|\le 1$. It then follows that
\begin{align*}
&\sqrt{\pi}(1-v_1^2)^{k+(d-3)/2}\left((1-v_1^2+iv_1\sqrt{1-v_1^2}w_2\right)^l \\
&= \sqrt{\pi}\exp\left(ilv_1w_2 - v_1^2\left(k+l\left(1-\frac{w_2^2}{2}\right)+\frac{d-3}{2}\right) + ilR_3+lR_4+k\tilde{R}_4\right)\\
&=\sqrt{\pi}\exp\left(\varphi_{k,l}(v_1,w_2)\right) + A, 
\end{align*}
where 
\[\varphi_{k,l}(v_1,w_2) = ilv_1w_2 - v_1^2(k+\tilde{l}(w_2)),\quad \tilde{l}(w_2) = l(1-w_2^2/2)+(d-3)/2\]
(note that $\tilde{l}(w_2)\ge l/2$ when $|w_2|\le 1$), and
\begin{equation}
\label{eq:A}
A = \sqrt{\pi}\exp\left(\varphi_{k,l}(v_1,w_2)\right)\left(\exp(ilR_3+lR_4+k\tilde{R}_4)-1\right). 
\end{equation}
We think of $\sqrt{\pi}\exp(\varphi_{k,l}(v_1,w_2))$ as the main term whose asymptotics can be computed explicitly by a Gaussian integral, and 
the other term $A$ as an error term. As such, we now write
\begin{equation}
\label{eq:integral-split}
\lambda_{k,l} = \int_{\mathbb{S}^{d-2}}\int_{-\epsilon}^{\epsilon} \sqrt{\pi}\exp(\varphi_{k,l}(v_1,w_2))\,dv_1\,d\mathbb{S}^{d-2}(w) +\int_{\mathbb{S}^{d-2}}\int_{-\epsilon}^\epsilon A\,dv_1\,d\mathbb{S}^{d-2}(w)
 + O(\Lambda_{k,l}^{-\infty}),
\end{equation}
and we aim to estimate each of the above integrals.

For the main term, we have:
\begin{lemma}
\label{lem:int-main}
\[\int_{\mathbb{S}^{d-2}}\int_{-\epsilon}^{\epsilon} \sqrt{\pi}\exp(\varphi_{k,l}(v_1,w_2))\,dv_1\,d\mathbb{S}^{d-2}(w) = \Theta(\Lambda_{k,l}^{-1/2}).\]
\end{lemma}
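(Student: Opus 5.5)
The plan is to compute the inner $v_1$-integral almost explicitly as a Gaussian integral and then integrate the result over $w\in\mathbb{S}^{d-2}$. For fixed $w$, write $a = a(w_2) = k+\tilde l(w_2)$, so that $a\ge k+l/2$ and $a\le k+l+(d-3)/2$. Hence $a=\Theta(k+l/2)$ uniformly in $w$, at least once $k+l/2$ is large. Small $(k,l)$ are finitely many cases where both sides are positive constants; there I would only need to check that the main term is nonzero, or else absorb these cases into the constants via the final theorem.

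\textbf{Step 1: complete the $v_1$-integral.} First extend the $v_1$-integral from $[-\epsilon,\epsilon]$ to $\mathbb{R}$. The error is bounded by $\int_{|v_1|>\epsilon}e^{-av_1^2}\,dv_1 = O(e^{-c(k+l/2)})$, which is $O(\Lambda_{k,l}^{-\infty})$. The standard formula $\int_{\mathbb{R}}e^{ibx-ax^2}\,dx = \sqrt{\pi/a}\,e^{-b^2/(4a)}$ with $b=lw_2$ then gives
\[
\int_{\mathbb{R}}\sqrt{\pi}\exp(\varphi_{k,l})\,dv_1 = \frac{\pi}{\sqrt{a(w_2)}}\exp\left(-\frac{l^2w_2^2}{4a(w_2)}\right).
\]
This quantity is real and positive, so there is no cancellation to worry about. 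The lemma therefore reduces to showing
\[
J := \int_{\mathbb{S}^{d-2}} a(w_2)^{-1/2}\exp\left(-\frac{l^2w_2^2}{4a(w_2)}\right)dw = \Theta(\Lambda_{k,l}^{-1/2}).
\]

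\textbf{Step 2: estimate $J$ in two regimes.} Since $a=\Theta(k+l)$, the prefactor satisfies $a^{-1/2}=\Theta((k+l)^{-1/2})$.

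In the regime $l^2\le k$, we have $l^2/a\lesssim l^2/k\le 1$. The exponential is then bounded above by $1$ and below by $e^{-C}$, so $J=\Theta(k^{-1/2})=\Theta(\Lambda_{k,l}^{-1/2})$.

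In the regime $l^2\ge k$, the exponent has size $l^2w_2^2/(k+l)$, so the integrand concentrates where $|w_2|\lesssim \sqrt{k+l}/l$. Write $dw$ in coordinates with $w_2\in[-1,1]$, where $dw = c_d(1-w_2^2)^{(d-4)/2}dw_2$. Because $d\ge3$, this density is bounded below near $w_2=0$ (and it is integrable for $d=3$). The integral over $w_2$ is then $\Theta$ of the width $\min(1,\sqrt{k+l}/l)$. This yields
\[
J = \Theta\left((k+l)^{-1/2}\cdot\frac{\sqrt{k+l}}{l}\right) = \Theta(l^{-1}) = \Theta(\Lambda_{k,l}^{-1/2}).
\]
For the lower bound I would restrict to $|w_2|\le \delta\sqrt{k+l}/l$, where the exponent is bounded. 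For the upper bound I would use $a\ge k+l/2$ to bound the exponent by $-c\,l^2w_2^2/(k+l)$ and compute the resulting Gaussian in $w_2$.

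\textbf{Main obstacle.} I expect the main difficulty to be the bookkeeping in the second regime. There one must check that $(k+l)^{-1/2}\sqrt{k+l}/l$ really matches $\Lambda_{k,l}^{-1/2}\sim l^{-1}$ uniformly, including the boundary case $l^2\approx k$, where the width $\sqrt{k+l}/l$ is of order $1$. The $(1-w_2^2)^{(d-4)/2}$ weight near $w_2=\pm1$ also needs handling, but it only matters when the width is of order $1$, which is the first regime essentially. The $O(\Lambda_{k,l}^{-\infty})$ truncation error is harmless, since the main quantity is $\gtrsim\Lambda_{k,l}^{-1/2}$.
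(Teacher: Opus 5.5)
Your proposal is correct and follows essentially the same route as the paper. Both arguments extend the $v_1$-integral to $\mathbb{R}$ at exponentially small cost and evaluate the Gaussian to get $\pi a(w_2)^{-1/2}\exp\bigl(-l^2w_2^2/(4a(w_2))\bigr)$. They then split into $l^2\le k$, where the exponential factor is bounded above and below, and $l^2>k$, where you sandwich by Gaussians in $w_2$ of width $\sim\sqrt{k+l}/l$. Your explicit lower bound by restricting to that width, together with the Gaussian upper bound, just spells out what the paper calls Laplace's method. For $d=3$, the endpoint singularity of $(1-w_2^2)^{-1/2}$ in the upper bound is handled by the trivial split $|w_2|\le 1/2$ versus $|w_2|>1/2$.
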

\begin{proof}
We first integrate in $v_1$, noting that we can change the limits of integration to $\mathbb{R}$ at exponentially small cost in $k+l/2$, to get
\begin{align*}
\int_{-\epsilon}^\epsilon \sqrt{\pi}\exp\left(\varphi_{k,l}(v_1,w_2)\right)\,dv_1 &= \int_{\mathbb{R}} \sqrt{\pi}\exp\left(ilv_1w_2 - v_1^2(k+\tilde{l}(w_2))\right)\,dv_1 + O(e^{-c(k+l/2)}) \\
&=\frac{\pi}{\sqrt{k+\tilde{l}(w_2)}}\exp\left(-\frac{l^2w_2^2}{4(k+\tilde{l}(w_2))}\right) + O(\Lambda_{k,l}^{-\infty})
\end{align*}
where $c = -\log(1-\epsilon^2)$.
We proceed to estimate the integral of the above expression, over $w\in\mathbb{S}^{d-2}$, and we split into the cases where $l^2\le k$ or $l^2>k$.
In the case $l^2\le k$, we have 
\[\frac{\pi}{\sqrt{k+\tilde{l}(w_2)}} = \Theta(\Lambda_{k,l}^{-1/2}),\quad \exp\left(-\frac{l^2w_2^2}{4(k+\tilde{l}(w_2))}\right) = \Theta(1).\]
It follows that the integral in $v_1$ is $\Theta(\Lambda_{k,l}^{-1/2})$ for all $w_2$, and hence
\begin{align*}
&\int_{\mathbb{S}^{d-2}}\int_{-\epsilon}^\epsilon \sqrt{\pi}\exp\left(\varphi_{k,l}(v_1,w_2)\right)\,dv_1\,d\mathbb{S}^{d-2}(w) \\
&=\int_{\mathbb{S}^{d-2}}\frac{\pi}{\sqrt{k+\tilde{l}(w_2)}}\exp\left(-\frac{l^2w_2^2}{4(k+\tilde{l}(w_2))}\right)\,d\mathbb{S}^{d-2}(w) + O(\Lambda_{k,l}^{-\infty}) = \Theta(\Lambda_{k,l}^{-1/2})
\end{align*}
in the case $l^2\le k$.

We now consider the case $l^2>k$, in which case $\Lambda_{k,l}^{-1/2} = \Theta(l^{-1})$. Since $l/2\le \tilde{l}(w_2)\le l+O(1)$, we thus have $\frac{1}{k+\tilde{l}(w_2)} = \Theta\left(\frac{1}{k+l}\right)$. It follows that there are $c_1,c_2>0$ such that
\[\exp\left(-c_1\frac{l^2w_2^2}{k+l}\right)\le \exp\left(-\frac{l^2w_2^2}{4(k+\tilde{l}(w_2))}\right) \le \exp\left(-c_2\frac{l^2w_2^2}{k+l}\right)\quad\text{ for all }|w_2|\le 1.\]
Moreover, $\frac{\pi}{\sqrt{k+\tilde{l}(w_2)}} = \Theta\left(\sqrt{\frac{1}{k+l}}\right)$. Thus, we have the inequality
\[\frac{c_1'}{\sqrt{k+l}}\exp\left(-c_1\frac{l^2w_2^2}{k+l}\right)\le \frac{\pi}{\sqrt{k+\tilde{l}(w_2)}}\exp\left(-\frac{l^2w_2^2}{4(k+\tilde{l}(w_2))}\right) \le \frac{c_2'}{\sqrt{k+l}}\exp\left(-c_2\frac{l^2w_2^2}{k+l}\right)\]
for some $c_1,c_1',c_2,c_2'>0$. By Laplace's Method, for any fixed $c>0$ we have
\begin{align*}
\int_{\mathbb{S}^{d-2}} \frac{1}{\sqrt{k+l}}\exp\left(-c\frac{l^2w_2^2}{k+l}\right)\,d\mathbb{S}^{d-2}(w) &= \frac{|\mathbb{S}^{d-3}|}{\sqrt{k+l}}\int_{-1}^1 (1-w_2^2)^{(d-4)/2}\exp\left(-\frac{cl^2}{k+l}w_2^2\right)\,dw_2\\
&=\frac{1}{\sqrt{k+l}}\Theta\left(\sqrt{\frac{k+l}{cl^2}}\right) \\
&=\Theta\left(l^{-1}\right) = \Theta(\Lambda_{k,l}^{-1/2})
\end{align*}
since we are in the case where $l^2>k$. It follows that
\[\int_{\mathbb{S}^{d-2}}\frac{\pi}{\sqrt{k+\tilde{l}(w_2)}}\exp\left(-\frac{l^2w_2^2}{4(k+\tilde{l}(w_2))}\right)\,d\mathbb{S}^{d-2}(w) = \Theta(\Lambda_{k,l}^{-1/2}),\]
and hence
\begin{align*}
&\int_{\mathbb{S}^{d-2}}\int_{-\epsilon}^\epsilon \sqrt{\pi}\exp\left(\varphi_{k,l}(v_1,w_2)\right)\,dv_1\,d\mathbb{S}^{d-2}(w) \\
&=\int_{\mathbb{S}^{d-2}}\frac{\pi}{\sqrt{k+\tilde{l}(w_2)}}\exp\left(-\frac{l^2w_2^2}{4(k+\tilde{l}(w_2))}\right)\,d\mathbb{S}^{d-2}(w) = \Theta(\Lambda_{k,l}^{-1/2})
\end{align*}
in case $l^2>k$ as well.
\end{proof}
For the other term, we show
\begin{lemma}
\label{lem:int-err}
For $A$ defined in \eqref{eq:A}, we have
\[\int_{\mathbb{S}^{d-2}}\int_{-\epsilon}^\epsilon A\,dv_1\,d\mathbb{S}^{d-2}(w) = O(\Lambda_{k,l}^{-3/4}).\]
\end{lemma}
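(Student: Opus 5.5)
Pointwise in $w$, I bound $|A|$ by the Gaussian main term times the size of the correction exponent, integrate in $v_1$, and only then integrate over $w$. The only inequality needed is $|e^{z}-1|\le |z|e^{\max(\operatorname{Re} z,0)}$ for complex $z$. Put $z = ilR_3+lR_4+k\tilde R_4$. Then $|\operatorname{Re} z|\le |lR_4|+|k\tilde R_4|\le C(k+l)v_1^4\le C\epsilon^2(k+l)v_1^2$ on $|v_1|\le\epsilon$. Also $|\exp(\varphi_{k,l})| = \exp(-v_1^2(k+\tilde l(w_2)))\le \exp(-v_1^2(k+l/2))$. Shrinking $\epsilon$ if necessary, the factor $e^{\max(\operatorname{Re} z,0)}$ is absorbed and costs at most half the Gaussian exponent:
\[
|A|\le C\left(l|v_1|^3+(k+l)v_1^4\right)\exp\!\left(-\tfrac12 v_1^2(k+l/2)\right).
\]
(Here I use $k+l\le 2(k+l/2)$.)

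Integrating over $v_1\in\mathbb{R}$ with the scaling $v_1 = s/\sqrt{k+l/2}$ gives
\[
\int_{-\epsilon}^{\epsilon}|A|\,dv_1 \le C\left(l(k+l/2)^{-2}+(k+l)(k+l/2)^{-5/2}\right) \le C(k+l/2)^{-1}.
\]
This holds uniformly in $w$, so integrating over $\mathbb{S}^{d-2}$ yields $O((k+l/2)^{-1})$. By the remark preceding Lemma \ref{lem:int-main}, this is $O(\Lambda_{k,l}^{-1/2})$ only, which is too weak in the regime $l^2>k$. There $\Lambda_{k,l}^{-3/4}\sim l^{-3/2}$, while the bound above can be as large as $1/l$. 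The $v_1^4$ term is fine: it contributes $(k+l/2)^{-3/2}\le C\Lambda_{k,l}^{-3/4}$. The main obstacle is therefore the cubic term $ilR_3$.

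To handle the cubic term I do not take absolute values. I write
\[
\exp(z)-1 = ilR_3 + O\!\left(|ilR_3|^2+|lR_4|+|k\tilde R_4|\right)e^{|\operatorname{Re} z|}.
\]
The remainder terms are estimated as above. The term $(lR_3)^2\sim l^2v_1^6$ contributes $l^2(k+l/2)^{-7/2}\le C(k+l/2)^{-3/2}$, which is acceptable. The linear term $ilR_3\exp(\varphi_{k,l})$ can be integrated exactly: $\int_{\mathbb{R}} v_1^3 e^{ialv_1-bv_1^2}\,dv_1$ with $a=w_2$ and $b=k+\tilde l(w_2)$ is a Gaussian moment. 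It equals $e^{-a^2l^2/4b}$ times a polynomial in $al/b$, of size about $b^{-2}\bigl(|a|l/\sqrt b + (|a|l/\sqrt b)^3\bigr)$. Multiplying by $l$ and integrating over $w$, the factor $e^{-w_2^2l^2/(4b)}$ confines $w_2$ to a window of width $\sqrt b/l$. This is exactly the effect computed in Lemma \ref{lem:int-main} via Laplace's method, and it gives an extra gain $\Theta(\min(1,\sqrt b/l))$. The result is $O\bigl(l b^{-2}\min(1,\sqrt b/l)\bigr)$. If $l^2\le k$, this is $O(lk^{-2})\le O(k^{-3/2})$. If $l^2>k$, then $b\sim k+l$ and we get $O(b^{-3/2})\le O(l^{-3/2})$. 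Both are $O(\Lambda_{k,l}^{-3/4})$.

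I expect the delicate point to be the bookkeeping in this last step. The prefactors $a l/\sqrt b$ and their cubes are polynomially large, and they must be beaten by the Gaussian in $w_2$. Swapping $\int_{-\epsilon}^{\epsilon}$ for $\int_{\mathbb{R}}$ costs only $O(e^{-c(k+l/2)})$, as before. The finitely many small $(k,l)$ are trivial.
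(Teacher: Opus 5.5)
Your proof is correct, but it handles the cubic term by a different mechanism than the paper. The paper never evaluates the $v_1$-integral of $ilR_3e^{\varphi_{k,l}}$. Instead it uses that $R_3=w_2v_1^3(\tfrac12-\tfrac{w_2^2}{3})$ carries a factor $w_2$, and substitutes $ilw_2=\partial_{v_1}\varphi_{k,l}+2(k+\tilde l(w_2))v_1$. One integration by parts of $v_1^3(\tfrac12-\tfrac{w_2^2}{3})\partial_{v_1}e^{\varphi_{k,l}}$ then gives $O((k+l/2)^{-3/2})$ uniformly in $w$. No integration over $\mathbb{S}^{d-2}$ and no case split are needed.

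You instead bound the $w_2$-dependent coefficient of $v_1^3$ by a constant and compute the Gaussian moment exactly. You then recover the lost gain by Laplace's method in $w_2$, with the split $l^2\le k$ versus $l^2>k$ as in Lemma \ref{lem:int-main}. The paper's route is shorter. Yours makes the mechanism explicit: the oscillation $e^{ilw_2v_1}$ becomes the localization $e^{-l^2w_2^2/(4b)}$. The two gains have the same source. Had you kept the factor $w_2$, your exact formula would bound the $v_1$-integral by $Cb^{-3/2}(t^2+t^4)e^{-t^2/4}$ with $t=l|w_2|/\sqrt b$, which is already $O(b^{-3/2})$ pointwise in $w$.

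Your use of $|e^z-1|\le|z|e^{\max(\operatorname{Re}z,0)}$ also makes explicit a point the paper passes over. The quantity $|ilR_3+lR_4+k\tilde R_4|$ is unbounded on $|v_1|\le\epsilon$ as $k+l\to\infty$. So the paper's remainder $O((k+l/2)^2v_1^6)$ needs either your absorption into the Gaussian, or the fact that the real part of this exponent is nonpositive. That fact does hold here. Apply $\log(1-y)\le -y$ to $\log(1-v_1^2)$ and to $|1-v_1^2+iv_1v_2|^2=(1-v_1^2)(1-v_1^2(1-w_2^2))$. Consequently your shrinking of $\epsilon$ is harmless but not actually needed.
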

\begin{proof}
Noting that $|ilR_3+lR_4+k\tilde{R}_4| = O((k+l/2)|v_3|^3)$ uniformly in $w_2$, we write
\[\exp(ilR_3+lR_4+k\tilde{R}_4)-1 = ilR_3 + lR_4 + k\tilde{R}_4 + O((k+l/2)^2|v_1|^6).\]
Moreover, since $\partial_{v_1}\varphi_{k,l}(v_1,w_2) = \partial_{v_1}(ilv_1w_2-v_1^2(k+\tilde{l}(w_2))) = ilw_2-2(k+\tilde{l}(w_2))v_1$, it follows that
\begin{align*}
ilR_3 = ilw_2v_1^3\left(\frac{w_2}{2}-\frac{w_2^3}{3}\right) 
&= v_1^3\left(\frac{1}{2}-\frac{w_2^2}{3}\right)(\partial_{v_1}\varphi_{k,l} + 2(k+\tilde{l}(w_2))v_1) \\
&= v_1^3\left(\frac{1}{2}-\frac{w_2^2}{3}\right)\partial_{v_1}\varphi_{k,l} + O((k+l/2)v_1^4).
\end{align*}
Combining this with $lR_4+k\tilde{R}_4 = O((k+l/2)v_1^4)$, we get
\begin{align*}
A &= \sqrt{\pi}\exp\left(\varphi_{k,l}\right)\left(\exp(ilR_3+lR_4+k\tilde{R}_4)-1\right) \\
&= \sqrt{\pi}\partial_{v_1}(\exp(\varphi_{k,l}))v_1^3\left(\frac{1}{2}-\frac{w_2^2}{3}\right) +\exp(\varphi_{k,l})(O((k+l/2)v_1^4) + O((k+l/2)^2v_1^6)).
\end{align*}

To integrate the first term with respect to $v_1$, we note that
\begin{align*}
\int_{-\epsilon}^{\epsilon} \partial_{v_1}(\exp(\varphi_{k,l}))v_1^3\,dv_1 &= \left[\exp(\varphi_{k,l}(v_1,w_2))v_1^3\right]\Big|_{-\epsilon}^{\epsilon} - \int_{-\epsilon}^\epsilon 3v_1^2\exp(\varphi_{k,l}(v_1,w_2))\,dv_1 \\
&= O(\Lambda_{k,l}^{-\infty}) + O\left(\int_{\mathbb{R}}v_1^2\exp(-(k+l/2)v_1^2)\,dv_1\right) \\
&= O(\Lambda_{k,l}^{-\infty}) + O((k+l/2)^{-3/2}) = O(\Lambda_{k,l}^{-3/4}),
\end{align*}
with the second equality following from $|\exp(\varphi_{k,l})| = \exp(-(k+\tilde{l}(w_2))v_1^2)\le \exp(-(k+l/2)v_1^2)$ as $\tilde{l}(w_2)\ge l/2$, and the last equality following because ${O((k+l/2)^{-r})} = O(\Lambda_{k,l}^{-r/2})$ for $(k,l)\ne (0,0)$. Consequently,
\[\int_{\mathbb{S}^{d-2}}\int_{-\epsilon}^\epsilon \sqrt{\pi}\partial_{v_1}(\exp(\varphi_{k,l})(v_1,w_2))v_1^3\left(\frac{1}{2}-\frac{w_2^2}{3}\right)\,dv_1\,d\mathbb{S}^{d-2}(w) = O(\Lambda_{k,l}^{-3/4}).\]
For the other two terms, we note that
\begin{align*}
&\int_{-\epsilon}^{\epsilon}|\exp(\varphi_{k,l}(v_1,w_2))|(k+l/2)v_1^4\,dv_1\\
&\le (k+l/2)\int_{\mathbb{R}}v_1^4\exp(-(k+l/2)v_1^2)\,dv_1 = O((k+l/2)^{-3/2}) = O(\Lambda_{k,l}^{-3/4}),
\end{align*}
and
\begin{align*}
&\int_{-\epsilon}^{\epsilon}|\exp(\varphi_{k,l}(v_1,w_2))|(k+l/2)^2v_1^6\,dv_1\\
&\le (k+l/2)^2\int_{\mathbb{R}}v_1^6\exp(-(k+l/2)v_1^2)\,dv_1 = O((k+l/2)^{-3/2}) = O(\Lambda_{k,l}^{-3/4}).
\end{align*}
Putting the three terms together thus yields $\int_{\mathbb{S}^{d-2}}\int_{-\epsilon}^\epsilon A\,dv_1\,d\mathbb{S}^{d-2}(w) = O(\Lambda_{k,l}^{-3/4})$.
\end{proof}
\begin{proof}[Proof of Theorem \ref{thm:eigenvalue}]
Using \eqref{eq:integral-split} and Lemmas \ref{lem:int-main} and \ref{lem:int-err}, we have $\lambda_{k,l} = \Theta(\Lambda_{k,l}^{-1/2}) + O(\Lambda_{k,l}^{-3/4}) = \Theta(\Lambda_{k,l}^{-1/2})$, as claimed.
\end{proof}

\begin{remark}
For the case $d=2$, Theorem \ref{thm:eigenfunctions} still holds (in this case, the ``spherical harmonics'' are just the complex exponentials $e^{\pm il\theta}$ on the circle). However, the estimate in Theorem \ref{thm:eigenvalue} fails to hold. In this case, the formula for $\lambda_{k,l}$ (which still holds) reduces to
\[\lambda_{k,l} = \sqrt{\pi}\int_{\mathbb{S}^1}(1-v_1^2)^k(1-v_1^2+iv_1v_2)^l\,d\mathbb{S}^1(v) = \sqrt{\pi}\int_0^{2\pi}\cos^{2k+l}(\theta)e^{il\theta}\,d\theta = \frac{\pi^{3/2}}{2^{l-1}}\binom{2k+l}{k}.\]
In the extreme case where $k=0$, this integral is $\frac{\pi^{3/2}}{2^{l-1}}$, which decays superalgebraically relative to $\Lambda_{0,l}\approx l^2$.
\end{remark}

\begin{remark}
An alternative formula for $\lambda_{k,l}$, which gives a different approach to computing the eigenvalue asymptotics, is
\begin{align*}
\lambda_{k,l} &= \sqrt{\pi}|\mathbb{S}^{d-2}|\int_{-1}^1(1-v_1^2)^{k+(l+d-3)/2}\frac{C_l^{(d/2-1)}\left(\sqrt{1-v_1^2}\right)}{C_l^{(d/2-1)}(1)}\,dv_1 \\
&= \sqrt{\pi}|\mathbb{S}^{d-2}|\int_{-\pi/2}^{\pi/2}\cos^{2k+l+d-2}(\theta)\frac{C_l^{(d/2-1)}(\cos(\theta))}{C_l^{(d/2-1)}(1)}\,d\theta,
\end{align*}
where $C_l^{(\alpha)}(x)$ are the ultraspherical/Gegenbauer polynomials, orthogonal on $[-1,1]$ with respect to the measure $(1-x^2)^{\alpha-1/2}\,dx$.  This formula can be obtained, either by following the same proof as Theorem \ref{thm:eigenfunctions} except choosing the spherical harmonic $Y_l(\omega)$ to be the zonal harmonic with respect to $e_1 = (1,\dots,0)$, i.e. $Z^{(e_1)}_l(\omega) = C_l^{(d/2-1)}(\omega_1)$, or by taking the original equation \eqref{eq:lambda}, converting to spherical coordinates, and using the Laplace-type integral representation
\[\frac{C_l^{(\alpha+1/2)}(\cos(\theta))}{C_l^{(\alpha+1/2)}(1)} = \frac{\Gamma(\alpha+1)}{\sqrt{\pi}\Gamma(\alpha+1/2)}\int_0^\pi (\cos(\theta)+i\sin(\theta)\cos(\phi))^n(\sin(\phi))^{2\alpha}\,d\phi.\]
In this article we use the double integral in \eqref{eq:lambda} to make an explicit stationary phase/Fourier transform argument, but the author believes a careful analysis of the single-variable formula can get the same asymptotics for $\lambda_{k,l}$.
\end{remark}

\appendix

\section{Proofs of Lemmas}
\label{sec:computations}

In this section, we prove some lemmas appearing in the main text. We begin by stating a simple computation appearing in several places:
\begin{lemma}
\label{lem:gaussian-inter}
If $\rho$ denotes the radial variable in $\mathbb{R}^d$, $\Delta = \sum_{j=1}^d\partial_{z_j}^2$ the usual Laplacian on $\mathbb{R}^d$, and $\rho\partial_\rho=\sum_{j=1}^d z_j\partial_{z_j}$ the dilation vector field on $\mathbb{R}^d$, then
\begin{align*}
e^{\pm\rho^2/2}\circ (-\Delta)\circ e^{\mp\rho^2/2} &= -\Delta\pm 2\rho\partial_\rho\pm d-\rho^2, \\
e^{\pm\rho^2/2}\circ (\rho\partial_\rho)\circ e^{\mp\rho^2/2} &= \rho\partial_\rho\mp\rho^2.
\end{align*}
\end{lemma}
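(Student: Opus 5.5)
Both identities follow from one short computation. I will compute the conjugated operators by the product rule, with $f$ a smooth test function, and use $\partial_{z_j}e^{\mp\rho^2/2} = \mp z_j e^{\mp\rho^2/2}$.

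\emph{Dilation identity.} We have
\[\rho\partial_\rho\bigl(e^{\mp\rho^2/2}f\bigr) = e^{\mp\rho^2/2}\Bigl(\rho\partial_\rho f \mp \sum_j z_j^2 f\Bigr) = e^{\mp\rho^2/2}\bigl(\rho\partial_\rho \mp \rho^2\bigr)f.\]
Multiplying by $e^{\pm\rho^2/2}$ gives the second identity.

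\emph{Laplacian identity.} First,
\[\partial_{z_j}\bigl(e^{\mp\rho^2/2}f\bigr) = e^{\mp\rho^2/2}(\partial_{z_j}f \mp z_jf).\]
Differentiating once more gives
\[\partial_{z_j}^2\bigl(e^{\mp\rho^2/2}f\bigr) = e^{\mp\rho^2/2}\bigl(\partial_{z_j}^2 f \mp 2z_j\partial_{z_j}f \mp f + z_j^2 f\bigr),\]
where the $z_j^2f$ term comes from $(\mp z_j)(\mp z_j)=z_j^2$. Summing over $j$ yields
\[\Delta\bigl(e^{\mp\rho^2/2}f\bigr) = e^{\mp\rho^2/2}\bigl(\Delta \mp 2\rho\partial_\rho \mp d + \rho^2\bigr)f.\]
Negating and multiplying by $e^{\pm\rho^2/2}$ gives $-\Delta\pm 2\rho\partial_\rho\pm d-\rho^2$, as claimed.

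There is no real obstacle. The only point needing care is the signs in the $\pm$ bookkeeping. Note that the $\rho^2$ term carries no $\pm$, because it is a square of the sign. Consistent with this, the paper's later uses apply the identity in dimension $d-1$ on $v^\perp$, where the constant becomes $\pm(d-1)$.
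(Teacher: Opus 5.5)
Your proposal is correct and follows essentially the same route as the paper, which conjugates $\partial_{z_j}$ to get $\partial_{z_j}\mp z_j$ and then squares and sums over $j$. That is exactly your product-rule computation written at the operator level, and your sign bookkeeping checks out.
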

\begin{proof}
Both follow from the calculation
\[e^{\pm\rho^2/2}\circ\partial_{z_j}\circ e^{\mp\rho^2/2} = \partial_{z_j}\mp z_j.\]
For the Laplacian, we see that
\[e^{\pm\rho^2/2}\circ\partial_{z_j}^2\circ e^{\mp\rho^2/2} = (e^{\pm\rho^2/2}\circ\partial_{z_j}\circ e^{\mp\rho^2/2})^2 = (\partial_{z_j}\mp z_j)^2 = \partial_{z_j}^2 \mp 2z_j\partial_{z_j} \mp 1 + z_j^2,\]
and hence
\begin{align*}
e^{\pm\rho^2/2}\circ (-\Delta)\circ e^{\mp\rho^2/2} &= -\sum_{j=1}^de^{\pm\rho^2/2}\circ\partial_{z_j}^2\circ e^{\mp\rho^2/2} \\
&= -\sum_{j=1}^d\left(\partial_{z_j}^2\mp 2z_j\partial_{z_j}\mp 1 + z_j^2\right) = -\Delta\pm 2\rho\partial_\rho\pm d-\rho^2.
\end{align*}
For the dilation vector field, we similarly see that
\[e^{\pm\rho^2/2}\circ (\rho\partial_\rho) \circ e^{\mp\rho^2/2} = \sum_{j=1}^d z_j(\partial_{z_j}\mp z_j) = \rho\partial_\rho\mp\rho^2. \]
\end{proof}

We now prove Lemma \ref{lem:gpeuc}, which helps prove part of Lemma \ref{lem:eigenbasis}:
\begin{proof}[Proof of Lemma \ref{lem:gpeuc}]
Noting that $L_k^{(\alpha)}(\rho^2)$ is a polynomial of degree $2k$ for any $\alpha$, and that $\rho^lY_l(\omega)$ is a polynomial of degree $l$, we see that $V_{k,l}\subset \gpeuc{2k+l}$. In the other direction, we recall that the spherical harmonics $Y_l$ have the property that the corresponding \emph{solid} spherical harmonic $\rho^lY_l$ is harmonic, i.e.\ $\Delta(\rho^lY_l(\omega)) = 0$. Moreover, we recall the \emph{Gauss decomposition} for polynomials (cf.\ \cite{ABW-book}, Theorem 5.7), namely that every polynomial $q(z)$ of degree $n$ can be written as a sum $q(z) = \sum_{2k\le n}\rho^{2k}q_{n-2k}(z)$, where $q_{n-2k}(z)$ is a harmonic homogeneous polynomial of degree $n-2k$ and hence of the form $\rho^lY_l(\omega)$ for some spherical harmonic $Y_l$, with $l=n-2k$. Each power $x^k$ can be written as a linear combination $x^k = \sum_{k'=0}^kc_{kk'}L_{k'}^{(\alpha)}(x)$ of the Laguerre polynomials for any $\alpha$; consequently $\rho^{2k}$ can be written as $\rho^{2k} = \sum_{k'=0}^k c_{kk'}L_k^{(l+d/2-1)}(\rho^2)$ with $l=n-2k$. It follows that any element $e^{-\rho^2/2}q(z)$ of $\gpeuc{n}$ can be written in the form
\[e^{-\rho^2/2}q(z) = \sum_{2k\le n}\sum_{k'=0}^k e^{-\rho^2/2}L_{k'}^{(n-2k+\frac{d}{2}-1)}(\rho^2)\rho^{n-2k}(c_{kk'}Y_{n-2k}(\omega))\in\bigoplus_{2k+l\le n}V_{k,l}.\]
This shows that $\bigoplus_{2k+l\le n}V_{k,l} = \gpeuc{n}$.

To show $\gpeuc{}$ is dense in $L^2(\mathbb{R}^d,dz)$, we note,
for any $\zeta\in\mathbb{R}^d$, that $e^{-\rho^2/2}e^{i\zeta\cdot z} = \sum_{j=0}^\infty \frac{i^j}{j!}e^{-\rho^2/2}(\zeta\cdot z)^j$ is in $\overline{\gpeuc{}}^{L^2}$; hence any $f$ in $\left(\overline{\gpeuc{}}^{L^2}\right)^\perp$ must satisfy the equation $\mathcal{F}(e^{-\rho^2/2}f)(\zeta) = 0$ for all $\zeta$, where $\mathcal{F}$ is the Fourier transform, thus showing that $e^{-\rho^2/2}f$, and hence $f$, must be zero.
\end{proof}

We now prove Lemma \ref{lem:eigenbasis}.
We recall that the lemma concerns functions of the form
\[\phi = e^{-\rho^2/2}L_k^{(l+\frac{d}{2}-1)}(\rho^2)\rho^lY_l(\omega).\]
\begin{proof}[Proof of Lemma \ref{lem:eigenbasis}]
The equation $-\Delta_{\mathbb{S}^{d-1}}\phi = l(l+d-2)\phi$ follows by applying the spherical Laplacian to the spherical harmonic, so we focus on proving that $\phi$ is an eigenfunction of the harmonic oscillator $-\Delta+\rho^2$. This calculation is routine using spherical coordinates using that the Laguerre polynomials $L_k^{(\alpha)}(x)$ are solutions of the equation $xy''+(\alpha+1-x)y'+ky=0$; we give a self-contained proof here. Writing $\phi = e^{-\rho^2/2}\tilde\rho$, Lemma \ref{lem:gaussian-inter} gives
\[(-\Delta+\rho^2)e^{-\rho^2/2}\tilde\phi = e^{-\rho^2/2}(-\Delta+2\rho\partial_\rho+d)\tilde\phi.\]
We now write $\tilde\phi = f(\rho^2)\rho^lY_l(\omega)$. Note that $\Delta(\rho^lY_l(\omega)) = 0$, and $\rho\partial_\rho(\rho^lY_l(\omega)) = l\rho^lY_l(\omega)$, so that $\nabla(f(\rho^2))\cdot\nabla(\rho^lY_l(\omega)) = \partial_\rho(f(\rho^2))\partial_\rho(\rho^lY_l(\omega)) = \frac{l}{\rho}\partial_\rho(f(\rho^2))\rho^lY_l(\omega)$ since $f(\rho^2)$ varies only in the radial direction.
It follows that
\[-\Delta(f(\rho^2)\rho^lY_l(\omega)) = \left(-\Delta-\frac{2l}{\rho}\partial_\rho\right)[f(\rho^2)]\rho^lY_l(\omega),\]
by applying the product rule for the Laplacian $\Delta(fg) = f\Delta g+g\Delta f+2\nabla f\cdot\nabla g$. Moreover,
\[\rho\partial_\rho[f(\rho^2)\rho^lY_l(\omega)] = \left(\rho\partial_\rho+l\right)[f(\rho^2)]\rho^lY_l(\omega).\]
Hence,
\begin{equation}
\label{eq:hosc-eigen-1}
\begin{aligned}
(-\Delta+\rho^2)\left[e^{-\rho^2/2}f(\rho^2)\rho^lY_l(\omega)\right]=e^{-\rho^2/2}\rho^lY_l(\omega)\left(-\Delta-\frac{2l}{\rho}\partial_\rho+2\rho\partial_\rho+2l+d\right)[f(\rho^2)].
\end{aligned}
\end{equation}
Finally, using the formula for the Laplacian in spherical coordinates, we have
\[-\Delta-\frac{2l}{\rho}\partial_\rho+2\rho\partial_\rho+2l+d = -\partial_\rho^2-\left(\frac{2l+d-1}{\rho}-2\rho\right)\partial_\rho + (2l+d).\]
If we let $x = \rho^2$, in which case $\partial_\rho = 2\rho\partial_x = 2\sqrt{x}\partial_x$, we have
\begin{align*}-\partial_\rho^2-\left(\frac{2l+d-1}{\rho}-2\rho\right)\partial_\rho + (2l+d) &= -(2\sqrt{x}\partial_x)^2-\left(\frac{2l+d-1}{\sqrt{x}}-2\sqrt{x}\right)2\sqrt{x}\partial_x+(2l+d)\\
&=-4x\partial_x^2-2\partial_x-(4l+2d-2)\partial_x+4x\partial_x+(2l+d) \\
&=-4\left(x\partial_x^2+(l+d/2-x)\partial_x\right)+(2l+d).
\end{align*}
It follows that, if $f(x) = L_k^{(l+d/2-1)}(x)$, i.e.\ $xf''(x)+(l+d/2-x)f'(x) = -kf(x)$, we have
\begin{equation}
\label{eq:hosc-eigen-2}
\begin{aligned}
\left(-\Delta-\frac{2l}{\rho}\partial_\rho+2\rho\partial_\rho+2l+d\right)[f(\rho^2)] &= \left[\left(-4\left(x\partial_x^2+(l+d/2-x)\partial_x\right)+(2l+d)\right)f\right](\rho^2) \\
&=(4k+2l+d)f(\rho^2).
\end{aligned}
\end{equation}
Putting together \eqref{eq:hosc-eigen-1} and \eqref{eq:hosc-eigen-2} shows the harmonic oscillator eigenfunction property. The orthogonal basis statement then follows from Lemma \ref{lem:gpeuc}.
\end{proof}

We now prove the remaining lemmas in Section \ref{sec:algebra}: Lemma \ref{lem:l2bounded}, regarding $L^2$ boundedness of $I_0^w$ and $(I_0^w)^*$, Lemma \ref{lem:rot-inter} on intertwining $I_0^w$ and $(I_0^w)^*$ with pullbacks by rotations, and Lemma \ref{lem:pi-inter}, on intertwining $I_0$ and $I_0^\sharp$ with certain vector fields.
\begin{proof}[Proof of Lemma \ref{lem:l2bounded}]
We prove boundedness, since then the adjoint statement follows from $I_0$ and $I_0^\sharp$ being formally adjoint. For fixed $(p,v)\in\mathcal{G}$ we have
\begin{equation}
\label{eq:i0wf}
I_0^wf(p,v) = e^{|p|^2/2}\int_{\mathbb{R}} e^{-|p+tv|^2/2}f(p+tv)\,dt = \int_{\mathbb{R}} e^{-t^2/2}f(p+tv)\,dt,
\end{equation}
with the last equality following from $|p+tv|^2 = |p|^2+t^2$ as $p\cdot v = 0$. Consequently, by Cauchy-Schwarz we have
\[|I_0^wf(p,v)|^2\le \left(\int_{\mathbb{R}} e^{-t^2}\,dt\right)\left(\int_{\mathbb{R}}|f(p+tv)|^2\,dt\right) = \sqrt{\pi}\int_{\mathbb{R}}|f(p+tv)|^2\,dt,\]
and hence
\begin{align*}
\|I_0^wf\|_{L^2(\mathcal{G},dv\,dp)}^2 &\le \int_{\mathbb{S}^{d-1}}\int_{v^\perp}\sqrt{\pi}\int_{\mathbb{R}}|f(p+tv)|^2\,dt\,dp\,dv \\
&= \sqrt{\pi}\int_{\mathbb{S}^{d-1}}\left(\int_{\mathbb{R}^d}|f(z)|^2\,dz\right)\,dv = \sqrt{\pi}|\mathbb{S}^{d-1}|\|f\|_{L^2(\mathbb{R}^d,dz)}^2,
\end{align*}
where the second line follows from the change of coordinates $z = p+tv$, $(p,t)\in v^{\perp}\times\mathbb{R}$ for a fixed $v\in\mathbb{S}^{d-1}$.
\end{proof}
\begin{proof}[Proof of Lemma \ref{lem:rot-inter}]
It suffices to show the statements with $I_0^w$ and $(I_0^w)^*$ replaced by $I_0$ and $I_0^\sharp$, acting on $C^\infty$ functions with sufficient decay. Indeed, $I_0^w = e^{|p|^2/2}I_0e^{-\rho^2/2}$ and $(I_0^w)^* = e^{-\rho^2/2}I_0^\sharp e^{|p|^2/2}$ are compositions of $I_0$ and $I_0^\sharp$, where the multiplication operators $e^{\pm\rho^2}$ and $e^{\pm|p|^2}$ commute with $(R_A)^*$ and $(R_A^{\mathcal{G}})^*$.

Thus, we aim to show $(R_A)^*\circ I_0^\sharp = I_0^\sharp\circ(R_A^{\mathcal{G}})^*$. For any $g\in C^\infty(\mathcal{G})$, we have
\begin{align*}((R_A)^*\circ I_0^\sharp)g(z) &= \int_{\mathbb{S}^{d-1}} g(v,p(Az,v))\,dv \\
&\overset{v=Av'}{=}\int_{\mathbb{S}^{d-1}} g(Av',p(Az,Av'))\,dv' \\
&=\int_{\mathbb{S}^{d-1}}g(Av',Ap(z,v'))\,dv' = (I_0^\sharp\circ(R_A^{\mathcal{G}})^*)g(z).
\end{align*}
To show $(R_A^{\mathcal{G}})^*\circ I_0 = I_0\circ (R_A)^*$, we note, for $f\in C^\infty(\mathbb{R}^d)$ with sufficient decay, that
\[((R_A^{\mathcal{G}})^*\circ I_0)f(v,p) = I_0f(Av,ap) = \int_{\mathbb{R}} f(Ap+tAv)\,dt = I_0[f\circ R_A](v,p) = (I_0\circ R_A^*)f(v,p),\]
as desired.
\end{proof}
\begin{proof}[Proof of Lemma \ref{lem:pi-inter}]
Writing $I_0f(p,v) = \int_{\mathbb{R}} f(p+tv)\,dt$, the identity $P_i\circ I_0 = I_0\circ\partial_{z_i}$ follows by differentiating under the integral, noting that
\[(\partial_{p_i}-v_iv\cdot\partial_p)[f(p+tv)] = \partial_{z_i}f(p+tv) - v_iv\cdot\nabla f(p+tv),\]
and $\int_{\mathbb{R}}v\cdot\nabla f(p+tv)\,dt = \int_{\mathbb{R}}\frac{d}{dt}[f(p+tv)]\,dt = 0$. The equality $(p\cdot\partial_p)\circ I_0 = I_0\circ(\rho\partial_\rho+1)$ follows from the previous identity by writing $p\cdot\partial_p = \sum_{i=1}^d p_iP_i$, noting that
\begin{align*}
\sum_{i=1}^d p_i\partial_{z_i}f(p+tv) &= \left(\sum_{i=1}^d (p_i+tv_i)\partial_{z_i}f(p+tv)\right) - t\sum_{i=1}^dv_i\partial_{z_i}f(p+tv) \\
&= (\rho\partial_{\rho}f)(p+tv) - tv\cdot\nabla f(p+tv),
\end{align*}
with $-\int_{\mathbb{R}}tv\cdot\nabla f(p+tv)\,dt = -\int_{\mathbb{R}} t\frac{d}{dt}[f(p+tv)]\,dt = \int_{\mathbb{R}}f(p+tv) = I_0f(p,v)$ from integration by parts.

For the backprojection, the identity $\partial_{z_i}\circ I_0^\sharp$ follows from differentiating under the integral by writing $I_0^\sharp g(z) = {\int_{\mathbb{S}^{d-1}}g(v,z-(z\cdot v)v)\,dv}$, and the equality $\rho\partial_{\rho}\circ I_0^\sharp = I_0^\sharp\circ(p\cdot\partial_p)$ follows from the previous identity by writing $\rho\partial_{\rho} = \sum_{i=1}^d z_i\partial_{z_i}$, noting that
\begin{align*}
\sum_{i=1}^d z_iP_ig(v,z-(z\cdot v)v) &= \left[\sum_{i=1}^d p_iP_ig\right](v,p) + (z\cdot v)\left[\sum_{i=1}^d v_iP_ig\right](v,p) \\
&=(p\cdot\partial_p)g(v,p),
\end{align*}
with $p = z-(z\cdot v)v$, since $\sum_{i=1}^d p_iP_i = p\cdot\partial_p$ and $\sum_{i=1}^dv_iP_i = 0$.
\end{proof}

\section{The normal operator in the asymptotically conic setting}
\label{sec:asymp-conic-normal}
Here, we translate the operator considered in \cite{VZ-24} to the Euclidean setting. The authors considered the operator
\[e^{-\Phi}L\tilde\chi Ie^{\Phi},\]
where $\Phi = -\frac{1}{2x^2} = -\rho^2/2$,
\[Lw(x,y) = \int w(\gamma_{x,y;\lambda,\omega})\,d\lambda\,d\mathbb{S}^{d-1}(\omega),\]
where $\gamma_{x,y;\lambda,\omega}$ is the geodesic passing through $(x,y)$ with tangent vector a positive multiple of $\lambda(x\partial_x) + \omega\cdot\partial_y$, and
\[\tilde\chi = \chi(x,y,\lambda/x,\omega)\]
where $\chi(x,y,\hat\lambda,\omega)$ is compactly supported in $\hat\lambda$ and is sufficiently close, in the Schwartz topology, to the Gaussian $e^{(\hat\lambda)^2/(2\alpha(x,y,\lambda,\omega))}$, where $\alpha(x,y,\lambda,\omega)$ satisfies that if $(x(t),y(t))$ is a geodesic starting at $(x,y)$ with velocity a positive multiple of $\lambda x\partial_x+\omega\cdot\partial_y$, then
\[x(t) = x + x\lambda t + x\alpha t^2 + O(xt^3).\]
For the Euclidean case, we note that $x\partial_x$ and $\partial_y$ are not unit vectors, but $x^2\partial_x = -\partial_\rho$ is, while we can choose local coordinates on $\mathbb{S}^{d-1}$ so that $\{\partial_{y_1},\dots,\partial_{y_{n-1}}\}$ are orthonormal for the metric $h(0)=g_{\mathbb{S}^{d-1}}$ at $y=0$, in which case $\{x\partial_{y_i}\}$ are nearly orthonormal for the Euclidean metric near $y=0$. Hence, thinking of $\lambda$ as small, we note that $\lambda\approx -\cos(\theta)$, with $\theta$ the angle between the vector $v$ positively parallel to $\lambda(x\partial_x)+\omega\cdot\partial_y$ and the unit outer radial vector field $\partial_\rho$. Note then that
\[|p(z,v)|^2 = |z-(z\cdot v)v|^2 = |z|^2\sin^2(\theta) = \rho^2(1-\lambda^2).\]
It follows that if $\hat\lambda = \lambda/x$, then
\[(\hat\lambda)^2 = \lambda^2\rho^2 = \rho^2 - |p(z,v)|^2.\]
Moreover,
\[\rho(t)^2 = \rho^2 + 2\rho\cos(\theta)t + \cos^2(\theta)t^2\implies \rho(t) = \rho\sqrt{1-2x\lambda t+x^2t^2}\]
so
\[x(t) = \frac{1}{\rho(t)} = \frac{x}{\sqrt{1-2x\lambda t+x^2t^2}} = x\left(1 + x\lambda t + \left(-\frac{1}{2}+\frac{3}{2}\lambda^2\right)x^2t^2 + O(x^3t^3)\right).\]
Thus
\[\alpha(x,y,\lambda,\omega) = -\frac{1}{2}+\frac{3}{2}\lambda^2\approx -\frac{1}{2}\]
as $x\to 0$, assuming $\lambda = O(x)$. Thus,
\[e^{(\hat\lambda)^2/(2\alpha)}\approx e^{-(\rho^2-|p(z,v)|^2)} = e^{|p(z,v)|^2-\rho^2}.\]
Thus, we can roughly think of $e^{-\Phi}L\tilde\chi$ as acting like
\[e^{-\Phi}L\tilde\chi g = e^{\rho^2/2}\int_{\mathbb{S}^{d-1}} e^{-\rho^2}e^{|p(z,v)|^2}g(v,p(z,v))\,d\mathbb{S}^{d-1}(v) = e^{-\rho^2/2}I_0^\sharp\left(e^{|p|^2}g\right),\]
so the operator of interest is roughly
\[(e^{-\Phi}L\tilde\chi)Ie^{\Phi} = e^{-\rho^2/2}I_0^\sharp e^{|p|^2} I_0e^{-\rho^2/2}.\]
This justifies the choice of normal operator $N$ to study in this article.

\end{document}